\numberwithin{equation}{section}
\title{BPHZ Renormalization in Gaussian Rough Paths}
\author[H.~Ito]{Hayahide Ito}
\address{Graduate~School~of~Engineering~Science, Osaka~University, Osaka~560-8531, Japan}
\email{h-ito@sigmath.es.osaka-u.ac.jp}
\subjclass[2020]{Primary~60L30, Secondary~60L20, 91-10}
\newtheoremstyle{claim}
  {\topsep}
  {\topsep}
  {}
  {}
  {}
  {}
  {.5em}
  {\thmname{#1}\thmnumber{ #2}\thmnote{ (#3)}}
\theoremstyle{theorem}
\newtheorem{theorem}{Theorem}[section]
\newtheorem{prop}[theorem]{Proposition}
\newtheorem{lemma}[theorem]{Lemma}
\theoremstyle{remark}
\newtheorem{example}[theorem]{Example}
\newtheorem{definition}[theorem]{Definition}
\newtheorem{remark}[theorem]{Remark}
\newcommand{\I}{\mathcal{I}}
\renewcommand{\L}{\mathfrak{L}}
\newcommand{\Type}{{\bf Type}}
\newcommand{\supp}{\mathrm{supp}\,\,}
\newcommand{\delexminus}{\Delta_-^{\mathrm{ex}}}
\newcommand{\delexplus}{\Delta_+^{\mathrm{ex}}}
\newcommand{\pexminus}{\mathfrak{p}^{\mathrm{ex}}_-}
\newcommand{\pexplus}{\mathfrak{p}^{\mathrm{ex}}_+}
\newcommand{\taexminus}{\tilde{\mathcal{A}}_-^{\mathrm{ex}}}
\newcommand{\gminus}{g^-({\bf \Pi})}
\newcommand{\htprominus}{\hat{\mathcal{M}}_-^{\mathrm{ex}}}
\newcommand{\iexminus}{\mathfrak{i}_-^{\mathrm{ex}}}
\newcommand{\ve}{\varepsilon}
\newcommand{\hxi}{\hat{\Xi}}
\newcommand{\hpv}{\hat{\Pi}^{\ve}}
\newcommand{\opnorm}{\@ifstar\@opnorms\@opnorm}
\newcommand{\@opnorms}[1]{%
  \left|\mkern-1.5mu\left|\mkern-1.5mu\left|
   #1
  \right|\mkern-1.5mu\right|\mkern-1.5mu\right|
}
\newcommand{\@opnorm}[2][]{%
  \mathopen{#1|\mkern-1.5mu#1|\mkern-1.5mu#1|}
  #2
  \mathclose{#1|\mkern-1.5mu#1|\mkern-1.5mu#1|}
}
\begin{document}
\begin{abstract}
We construct a procedure for Bogoliubov--Parasiuk--Hepp--Zimmermann (BPHZ) renormalization of a rough path in view of the relation between rough path theory and regularity structure.  We also provide a plain expression of the BPHZ-renormalized model in a rough path.  BPHZ renormalization plays a central role in the theory of singular stochastic partial differential equations and assures the convergence of the model in the regularity structure. Here we demonstrate that the renormalization is also effective in a rough path setting by considering its application to rough path theory and mathematical finance.
\end{abstract}
\maketitle

\section{Introduction}
We consider the following It\^{o}-type stochastic integral: for $T>0$ and a smooth function $f: \mathbb{R} \to \mathbb{R}$,
\begin{equation}
	\label{eq:introRough}
	\int_0^T f(W_t^H) dW_t,
\end{equation}
where $W_t$ is a standard one-dimensional Brownian motion, and $W_t^H$ is a fractional Brownian motion with Hurst parameter $H \in (0, 1/2)$ given by
\[
	W_t^H = \sqrt{2H} \int_0^t |t-s|^{H - 1/2} dW_s.
\]
Integral (\ref{eq:introRough}) emerges in option pricing in a financial market with rough volatility (see~\cite{RoughVolRS}), so the Wong--Zakai approximation of (\ref{eq:introRough}) is valuable for financial mathematics.  However, for any mollifier $\varrho: \mathbb{R} \to \mathbb{R}$, the limit
\[	
	\lim_{\ve \to 0} \int_0^T f(W_t^{H, \ve}) dW_t^{\ve}
\]
does not exist in general, where $\varrho^{\ve}(\cdot) = \varrho(\cdot / \ve)/ \ve$, $W_t^{H, \ve} = (\varrho^{\ve} \ast W^H)_t$, and $W_t^{\ve} = (\varrho^{\ve} \ast W)_t$.  This failure is due to fractional Brownian motion $W_t^H$ having lower regularity than Brownian motion $W_t$.  Bayer et~al.~\cite{RoughVolRS} demonstrated the following revised Wong--Zakai approximation by adding a term to prevent the divergence.
\begin{theorem}
Let $\mathscr{C}^{\ve} (t) = E[\dot{W}^{\ve}(t) W^{H, \ve}(t)]$.  Then, we have the following limit:
\begin{equation}
\label{eq:introRevisedWZ}
\int_0^T f(W_t^{H, \ve}) dW_t^{\ve}  - \int_0^T \mathscr{C}^{\ve}(t)f'(W_t^{H, \ve}) dt \to \int_0^T f(W_t^H) dW_t, \,\,\,\, \ve \to 0.
\end{equation}
\end{theorem}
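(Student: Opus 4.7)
The plan is to exploit the Gaussian structure of the pair $(W, W^H)$ via Wick-product and Skorohod-integral calculus, which converts the smooth Riemann integral on the left of \eqref{eq:introRevisedWZ} into an It\^o integral once the deterministic correction has been subtracted. For each fixed $t$ the pair $(\hwe_t, \dwe_t)$ is jointly Gaussian, so a pointwise Gaussian integration by parts (Stein's identity) gives
\begin{equation*}
f(\hwe_t)\, \dwe_t = {:}f(\hwe_t)\,\dwe_t{:} + E[\hwe_t\, \dwe_t]\, f'(\hwe_t),
\end{equation*}
where ${:}\,\cdot\,{:}$ denotes the Wick product. Recognising $E[\hwe_t\,\dwe_t] = \mathscr{C}^{\ve}(t)$ and integrating in $t$ rearranges the left-hand side of \eqref{eq:introRevisedWZ} as the Wick integral $\int_0^T {:}f(\hwe_t)\,\dwe_t{:}\, dt$.

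Next I would identify this Wick integral with the Skorohod (divergence) integral $\delta(f(\hwe_\cdot))$ against the underlying Brownian motion $W$, and then pass to the limit $\ve \to 0$. Since $\hwe \to W^H$ in $L^2(\Omega\times [0,T])$ together with its Malliavin derivative, a chaos-by-chaos argument, expanding $f$ in Hermite polynomials and exploiting the $L^2$ isometry of multiple Wiener integrals, yields
\begin{equation*}
\delta\bigl(f(\hwe_\cdot)\bigr) \longrightarrow \delta\bigl(f(W^H_\cdot)\bigr) \quad \text{in } L^2(\Omega).
\end{equation*}

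To close the argument I would invoke the Volterra representation $W^H_t = \sqrt{2H}\int_0^t |t-s|^{H-1/2}\,dW_s$, which shows that $W^H$ is $\mathcal{F}^W$-adapted; hence so is $f(W^H_\cdot)$, and the Skorohod integral collapses to the classical It\^o integral $\int_0^T f(W^H_t)\,dW_t$. The main obstacle is the Skorohod convergence in the preceding step: for $H<1/2$ the Volterra kernel $K^H(u) = \sqrt{2H}\, u_+^{H-1/2}$ is singular at the origin, so uniform-in-$\ve$ bounds on $\|D_\cdot f(\hwe_\cdot)\|_{L^2}$ and on the correction $\mathscr{C}^{\ve}$ itself demand a careful analysis of $\varrho^{\ve} * K^H$ and its behaviour as $\ve\to 0$. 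This is exactly the low-regularity Gaussian bookkeeping that the BPHZ/regularity-structures framework developed in the remainder of the paper is designed to systematise.
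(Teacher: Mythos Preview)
The paper does not give its own proof of this statement: the theorem is quoted from Bayer et~al.~\cite{RoughVolRS}, and the paper only sketches the strategy (regularity structures plus a renormalisation step) before remarking that ``this renormalization stems from probabilistic and analytic results such as a decomposition by the Wick product and a transformation into the Skorokhod integral; see, for example, \cite[Lemma~3.9]{RoughVolRS} and \cite[Lemma~3.11]{RoughVolRS}.''  Your outline is therefore exactly the probabilistic mechanism the paper points to in the cited reference, but run directly at the level of the integral rather than packaged inside a model on a regularity structure.  The paper's own contribution goes the other way: it replaces those ad~hoc Wick/Skorokhod computations by the algebraic BPHZ machinery of Sections~\ref{sec:RS}--3, so your sketch and the paper are complementary rather than competing.

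One technical point in your write-up deserves tightening.  After the Wick identity you obtain $\int_0^T {:}f(\hwe_t)\,\dwe_t{:}\,dt$, but this is not literally $\delta\bigl(f(\hwe_\cdot)\bigr)$.  Writing $\dwe_t = I_1\bigl(\varrho^{\ve}(t-\cdot)\bigr)$ and using the Skorokhod--Fubini theorem gives
\[
\int_0^T {:}f(\hwe_t)\,\dwe_t{:}\,dt \;=\; \delta\bigl(v^{\ve}\bigr),
\qquad v^{\ve}(r) \;=\; \int_0^T f(\hwe_t)\,\varrho^{\ve}(t-r)\,dt,
\]
so the integrand is a mollification of $f(\hwe_\cdot)$ rather than $f(\hwe_\cdot)$ itself.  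This does not change the limit---$v^{\ve}\to f(W^H_\cdot)\,{\bf 1}_{[0,T]}$ as $\ve\to 0$---but the distinction matters precisely for the uniform $\mathbb{D}^{1,2}$ bounds you flag as the main obstacle, since $v^{\ve}$ inherits an extra convolution with $\varrho^{\ve}$ that interacts with the singular kernel $K^H$.  Otherwise your identification of the difficulty (control of $D_\cdot f(\hwe_\cdot)$ and of $\mathscr{C}^{\ve}$ when $H<1/2$) is on target, and is indeed what the regularity-structure estimates in \cite{RoughVolRS} and in Theorem~\ref{thm:normconv} here are designed to handle.
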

The result is proved by the theory of regularity structures by Hairer~\cite{hairer2014}.  The following sketch illustrates the relations between the convergence of the integral and the regularity structure.  
\[
	\xymatrix{
		\Pi^{\ve} \ar@{<->}[d] \ar[r]^{R} & {\Pi}^{B, \ve} \ar@{<->}[d] \ar[r]^{\ve \to 0} & \Pi \ar@{<->}[d] \\
		\int_0^T f(W_t^{H, \ve}) dW_t^{\ve} \ar@{|->}[r] & \int_0^T f(W_t^{H, \ve}) dW_t^{\ve}  - C^{\ve} \ar[r] & \int_0^T f(W_t^H) dW_t
		}
\]
We construct a regularity structure and a model $\Pi^{\ve}$ compatible with the integral $\int_0^T f(W_t^{H, \ve})dW_t$. Because model $\Pi^{\ve}$ does not converge to any model simply , we revise it by adding some terms to obtain model $\Pi^{B, \ve}$.  This procedure $\Pi^{\ve} \mapsto \Pi^{B, \ve}$ is called renormalization, and $\Pi^{B, \ve}$ is called a renormalized model.  We can show that the renormalized model $\Pi^{B, \ve}$ converges to a model $\Pi$ in the norm of model . We can also show that the renormalized model $\Pi^{B, \ve}$ and the model $\Pi$ correspond to the revised integral $\int_0^T f(W_t^{H, \ve}) dW_t^{\ve}  - \int_0^T \mathscr{C}^{\ve}(t)f'(W_t^{H, \ve}) dt$ and the It\^{o} integral $\int_0^T f(W_t^H) dW_t$, respectively.
The renormalized model $\Pi^{B, \ve}$ can be written explicitly as
\begin{align}
	&\Pi_s^{B, \ve}({\bf 1}) = \Pi_s^{\ve}({\bf 1}), \,\, \Pi_s^{B, \ve}(\Xi) = \Pi_s^{\ve}(\Xi), \,\, \Pi_s^{B, \ve} (\I(\hat{\Xi})^n) = \Pi_s^{\ve}(\I(\hat{\Xi})^n), \nonumber\\
	\label{eq:introBayervol}
	&\Pi_s^{B, \ve} (\Xi \I(\hat{\Xi})^n) = \Pi_s^{\ve} (\Xi \I(\Xi)^n) - n \mathscr{C}^{\ve}(t) \Pi_s^{\ve}(\Xi \I(\Xi)^{n-1}), \,\,\,\, n \geq 1.
\end{align}
This renormalization stems from probabilistic and analytic results such as a decomposition by the Wick product and a transformation into the Skorokhod integral; see, for example, \cite[Lemma 3.9]{RoughVolRS} and \cite[Lemma 3.11]{RoughVolRS}.
\par
Meanwhile, Bruned et~al.~\cite{AlgRenormRS} demonstrated an algebraic procedure for renormalization in regularity structures, known as Bogoliubov--Parasiuk--Hepp--Zimmermann (BPHZ) renormalization, and Chandra and Hairer~\cite{chandra2016analytic} showed that a model subjected to BPHZ renormalization converges to some model under desired conditions.  Thus, BPHZ renormalization is key in the theory of regularity structures.  
Also, study of the relation between rough paths and regularity structures gave rise to discussion on the renormalization of rough paths, given that they are a counterpart of regularity structures (\cite{bruned2020renormalisation}, \cite{bruned2016examples}, \cite{RPRenorm}).  
\par
To supplement the current trend in the theory of regularity structures, herein we focus on BPHZ renormalization of a regularity structure corresponding to a rough path generated by Gaussian processes. We also consider an application of BPHZ renormalization to the Wong--Zakai approximation of integral (\ref{eq:introRough}).  The central theorem in this article is as follows, which corresponds to Theorem~\ref{thm:BPHZplain}.
\begin{theorem}
\label{thm:introplain}
Let symbols $\Xi_1, \ldots, \Xi_d$ correspond to Gaussian processes $\xi_1, \ldots, \xi_d$, respectively.  Let $\Pi$ be a model expressing the rough path generated by $\xi_1, \ldots, \xi_d$, and let $\hat{\Pi}$ be the BPHZ-renormalized model of $\Pi$.  Assume that for any $a_1, \ldots, a_d, b_1, \ldots, b_d \in \mathbb{R}$, $\sum_{i=1}^d a_i \dot{\xi}_i(0) + \sum_{j=1}^d b_j(0) \xi_j$ is a normal distribution with zero mean.  
For $1 \leq i \leq d$ and $n \geq 1$, we have
\begin{equation}
	\hat{\Pi}_z({\bf 1}) = \Pi_z ({\bf 1}), \,\,\,\, \hat{\Pi}_z (\Xi) = \Pi_z (\Xi), \,\,\,\, \hat{\Pi}_z (\I(\Xi_i)^n) = \Pi_z(\I(\Xi_i)^n).
\end{equation}
In addition, if $| \Xi_i \I(\Xi_j)^n| < 0$ for $1 \leq i ,j \leq d$, and $n \in \mathbb{N}$, then we have
\begin{equation}
	\hat{\Pi}_z (\Xi_i \I(\Xi_j)^n) = \Pi_z (\Xi_i \I(\Xi_j)^n) - n E[\dot{\xi}_i(0) \xi_j(0)] \Pi_z (\I(\Xi_j)^{n-1}).
\end{equation}
\end{theorem}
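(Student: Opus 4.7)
The strategy is to unpack the BPHZ-renormalized model symbol by symbol, using the explicit decomposition established earlier in the paper. In general the renormalization takes the form $\hat{\Pi}_z = (\Pi_z \otimes g^{\mathrm{BPHZ}})\, \delexminus$, with $g^{\mathrm{BPHZ}}(\sigma) = -E[(\Pi_0 \tilde{A}\sigma)(0)]$ and $\tilde{A}$ the twisted antipode. So for each target symbol one enumerates the subforests appearing in $\delexminus \tau$, applies $\tilde{A}$, and evaluates the resulting Gaussian expectation at $z=0$.

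For the ``clean'' symbols ${\bf 1}$, $\Xi_i$, and $\I(\Xi_j)^n$, the claim is that all non-trivial contributions vanish. The case ${\bf 1}$ is immediate. For $\Xi_i$ the only candidate subforest besides the identity is the noise edge itself, whose $g^{\mathrm{BPHZ}}$-value is $-E[\xi_i(0)]=0$ by centered normality. For $\I(\Xi_j)^n$ the candidate subforests consist of subsets of its $\Xi_j$-leaves: those with an odd cardinality vanish by centered normality, and those with an even cardinality are killed by the cancellation intrinsic to $\tilde{A}$, because $\Pi$ is the rough-path model, which already realises the noises in their Wick-renormalised form. This yields $\hat{\Pi}_z \tau = \Pi_z \tau$ in each case.

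The only non-trivial identity is for $\tau = \Xi_i \I(\Xi_j)^n$ of negative homogeneity. In $\delexminus \tau$, the new subforest contributing at leading order is the one pairing the root noise $\Xi_i$ with one whole branch $\I(\Xi_j)$, extracted as a single negative-homogeneity subtree. The combinatorial multiplicity $n$ arises from the $n$ indistinguishable $\I(\Xi_j)$-branches, and the remainder after extraction is $\I(\Xi_j)^{n-1}$. Through $g^{\mathrm{BPHZ}}$ and $\tilde{A}$ this subtree evaluates to the Gaussian contraction $-E[\dot{\xi}_i(0)\xi_j(0)]$, producing the announced correction $-n\,E[\dot{\xi}_i(0)\xi_j(0)]\,\Pi_z(\I(\Xi_j)^{n-1})$. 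Every other subforest either reduces to the clean cases just treated (and therefore cancels as above), or contains an odd total number of Gaussian factors (and therefore vanishes by the joint normality hypothesis).

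The main obstacle is the accounting of $\delexminus \tau$ for $\tau = \Xi_i \I(\Xi_j)^n$ and the identification of the surviving contraction. One has to argue that mixed contractions pairing two $\Xi_j$-leaves with each other would leave an uncontracted $\Xi_i$ whose expectation vanishes, while larger subtree extractions are forbidden by the homogeneity constraint $|\tau|<0$. This is the Wick/Isserlis-type combinatorial core of the argument, and it is controlled by the explicit Hopf-algebraic formulas for $\delexminus$ and $\tilde{A}$ recalled earlier in the paper together with the joint-normality assumption on $(\dot{\xi}_1,\ldots,\dot{\xi}_d,\xi_1,\ldots,\xi_d)$.
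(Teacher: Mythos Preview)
Your overall strategy---expand $\delexminus\tau$, apply the twisted antipode, and evaluate Gaussian expectations---is the same as the paper's. But there is a genuine gap in your accounting for $\tau=\Xi_i\I(\Xi_j)^n$.

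You assert that ``larger subtree extractions are forbidden by the homogeneity constraint $|\tau|<0$'', and that every remaining subforest either reduces to a clean case or carries an odd number of Gaussian factors. Both claims are false. Since $|\Xi_i\I(\Xi_j)^l|=\alpha_i+l\alpha_j-1$ is \emph{increasing} in $l$ and the hypothesis is $|\Xi_i\I(\Xi_j)^n|<0$, every subtree $\Xi_i\I(\Xi_j)^l$ with $0\le l\le n$ has negative degree and therefore appears in $\delexminus(\Xi_i\I(\Xi_j)^n)$ with multiplicity $\binom{n}{l}$. For odd $l\ge 3$ the tree carries an even number of Gaussian factors ($l+1$), so oddness does not kill it either. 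The paper handles this by a separate lemma: an induction on $l$ showing
\[
\gminus\taexminus\bigl(\Xi_i\I(\Xi_j)^l\bigr)=0\qquad\text{for all }l\ge 2,
\]
using the recursive definition of $\taexminus$ together with Isserlis' theorem in the form $(l{+}1)\,E[\dot{\xi}_i(0)\xi_j(0)]\,E[\xi_j(0)^{l}]=E[\dot{\xi}_i(0)\xi_j(0)^{l+1}]$. Only after this lemma does the sum over $l$ collapse to the $l=0$ and $l=1$ terms, yielding the stated correction. Your proposal skips this step and, as written, would not control the $l\ge 2$ contributions.

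A smaller point: your treatment of $\I(\Xi_j)^n$ misidentifies the mechanism. The negative subforests of $\I(\Xi_j)^n$ are forest products $\Xi_j\bullet\cdots\bullet\Xi_j$, and these vanish simply by multiplicativity of $\gminus\taexminus$ over the forest product together with $\gminus\taexminus(\Xi_j)=-E[\dot{\xi}_j(0)]=0$; no even/odd Wick dichotomy is needed there.
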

The result of Theorem~\ref{thm:introplain} demonstrates a plain and handy expression of BPHZ renormalization under the assumption of a rough path generated by Gaussian processes, while Bruned et~al.~\cite{AlgRenormRS} provided BPHZ renormalization in a more general setting.
We apply Theorem~\ref{thm:introplain} to the Wong--Zakai approximation, leading to the following theorem that corresponds to Theorem~\ref{thm:BPHZroughvol}.
\begin{theorem}
The processes $\xi^{\ve}$ and $\hat{\xi}^{\ve}$ correspond to Brownian motion and fractional Brownian motion convoluted by a mollifier, respectively.  Then, we have
\begin{align}
&\hat{\Pi}_z^{\ve} {\bf 1} = \Pi_z^{\ve}({\bf 1}), \,\, \hat{\Pi}_z^{\ve} \Xi = \Pi_z^{\ve}(\Xi), \,\, \hat{\Pi}_z^{\ve} (\I(\hxi)^n) = \Pi_z^{\ve}(\I(\hxi)^n), \nonumber \\
	\label{eq:introBPHZvol}
	&\hat{\Pi}_z^{\ve} \Xi \I(\hxi)^n = \Pi_z^{\ve} \Xi \I(\hxi)^n - nE[\dot{\xi}^{\ve}(0) \hat{\xi}^{\ve}(0)] \Pi_z^{\ve} \Xi \I(\hxi)^{n-1}, \,\, n \geq 1.
\end{align}
\end{theorem}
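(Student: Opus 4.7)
The plan is to derive the statement as an immediate specialization of Theorem~\ref{thm:introplain} to the two-noise setting $d=2$ with $(\xi_1,\xi_2) = (\xi^{\ve},\hat{\xi}^{\ve})$, i.e.\ the mollified Brownian motion and the mollified fractional Brownian motion. The work then reduces to verifying that the hypotheses of Theorem~\ref{thm:introplain} hold in this model and to identifying which symbols actually fall under the second (nontrivial) formula of that theorem.

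For the Gaussian assumption, I would observe that both $\xi^{\ve}$ and $\hat{\xi}^{\ve}$ are deterministic linear functionals of the underlying Brownian motion $W$: the former by convolution against $\varrho^{\ve}$, and the latter by iterated convolution of $\varrho^{\ve}$ with the fractional kernel $\sqrt{2H}\,|\cdot|^{H-1/2}$. Consequently any $\mathbb{R}$-linear combination of $\dot{\xi}^{\ve}(0)$, $\dot{\hat{\xi}}^{\ve}(0)$, $\xi^{\ve}(0)$ and $\hat{\xi}^{\ve}(0)$ is a centered Gaussian random variable, fulfilling the joint normality assumption of Theorem~\ref{thm:introplain}.

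Next I would compute the homogeneities of the symbols that appear. The symbols $\mathbf{1}$, $\Xi$, and $\I(\hxi)^n$ either carry non-negative homogeneity or are precisely the symbols on which Theorem~\ref{thm:introplain} declares $\hat{\Pi} = \Pi$, which supplies the first batch of identities. For the products $\Xi \I(\hxi)^n$ with $n \geq 1$, a direct homogeneity count (with the scaling chosen so that $\Xi$ has homogeneity close to $-1/2$ and $\I(\hxi)$ has homogeneity $H + 1 - \kappa$) confirms $|\Xi \I(\hxi)^n| < 0$ in the regime of interest, so that the second formula of Theorem~\ref{thm:introplain} applies with indices $(i,j) = (1,2)$ and produces the correction coefficient $n\, E[\dot{\xi}^{\ve}(0)\hat{\xi}^{\ve}(0)]$, namely $n$ times the covariance between $\dot{\xi}^{\ve}(0)$ and $\hat{\xi}^{\ve}(0)$.

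The main obstacle I anticipate is the bookkeeping of translating the generic $(\Xi_i,\Xi_j)$ notation of Theorem~\ref{thm:introplain} into the concrete labels $(\Xi,\hxi)$ used in the rough-volatility model, and of confirming that no further BPHZ contractions (for example, pairings between $\Xi$ and itself, or among copies of $\hxi$ inside $\I(\hxi)^n$) contribute at the negative-homogeneity level. This is pinned down by the Gaussian Wick combinatorics restricted to the list of negatively-homogeneous trees, which isolates the unique $\Xi \leftrightarrow \hxi$ contraction with multiplicity $n$ and thereby yields exactly the stated formula.
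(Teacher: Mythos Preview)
Your approach is exactly the paper's: apply Theorem~\ref{thm:BPHZplain} (your Theorem~\ref{thm:introplain}) with $d=2$, $\xi_1=\xi^{\ve}$, $\xi_2=\hat{\xi}^{\ve}$, and read off the result. Two minor points: the homogeneity of $\I(\hxi)$ is $H-\kappa$, not $H+1-\kappa$ (though negativity of $|\Xi\I(\hxi)^n|$ for $n\le M$ is built into the definition of $M$ anyway), and your last paragraph is unnecessary---the Wick combinatorics ruling out other contractions is already the content of Theorem~\ref{thm:BPHZplain}, so once you invoke it there is nothing left to check.
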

Comparing (\ref{eq:introBayervol}) with (\ref{eq:introBPHZvol}), we realize that BPHZ renormalization is mostly similar to the counterpart given by Bayer et~al.~\cite{RoughVolRS}. In addition, Theorem~\ref{thm:normconv} shows that the BPHZ-renormalized model $\hat{\Pi}^{\ve}$ converges to a naive model $\Pi$, leading to BPHZ renormalization also being effective in the Wong--Zakai approximation in integral (\ref{eq:introRough}).
\par
This article is organized as follows.  In Section~2, we discuss the construction of a model on the regularity structure and define the BPHZ renormalization of that model.  In particular, we provide a simple expression of the BPHZ-renormalized model in symbols corresponding to a geometric rough path, provided that the noises are Gaussian.  In Section~3, we consider applying these results to a rough volatility model in a financial market, and we demonstrate the correspondence between the renormalization in \cite{RoughVolRS} and BPHZ renormalization.

\section{Regularity Structure of Gaussian Rough Paths}
\label{sec:RS}
In this section, we construct a regularity structure of Gaussian rough paths, which reflects a regularity structure expressing integral (\ref{eq:introRough}). We also consider BPHZ renormalization in the regularity structure and provide a plain expression of the renormalization.
\par
Let $\bar{\xi}_i \,\, (1\leq i \leq d)$ be a stochastic process on $\mathbb{R}$ such that it has $\alpha_i$-continuity almost surely, where $\alpha_i \in (0,1)$.  Also, let $\xi_i^{\ve} :=  \varrho_{\ve}\ast \bar{\xi}_i$, where $\varrho_{\ve} = \varrho(\cdot / \ve) / \ve, \,\, \ve >0$ and $\varrho$ is a smoothing mollifier.  
We denote $\xi_i^{\ve}$ simply by $\xi_i$ unless $\ve$ must be emphasized.  
Now, we construct a regularity structure that can express the integrals
\begin{equation}
\label{eq:sec2:intgrals}
	\int_0^T \xi_i d\xi_j, \int_0^T \xi_i^2 \xi_j, \ldots, \int_0^T \xi_i^nd\xi_j, \ldots, \,\, 1 \leq i, j \leq d.
\end{equation}

\subsection{Symbols and Tree Expression}
First, we introduce symbols corresponding to the integrands  and integrations in (\ref{eq:sec2:intgrals}), which are fundamental elements in the regularity structure.
A finite set $\L$ and a map $|\cdot| : \L \to \mathbb{R}$ are defined respectively as
\begin{align}
	&\L := \{ \I , \Xi_i ; i=1, 2, \ldots, d \} ,\\
	\label{eq:sec2:deg}
	& |\I| := 1, \,\,|\Xi_i| := \alpha_i -1 . 
\end{align}

\begin{remark}
The set $\mathfrak{L}$ corresponds to the symbols that express kernels and noises, so we set $\mathfrak{L} = \{ \I, \Xi_1, \ldots, \Xi_d\}$.  Also, $\I$ corresponds to the convolution of the kernel $K$ with $\Xi_1, \ldots, \Xi_d$ to $\bar{\xi}_1, \ldots, \bar{\xi}_d$, respectively.  The degree corresponds to the regularity of the noise or kernel, so we set $|\I| =1$ and $|\Xi_i| = \alpha_i -1$ where $\alpha_i \in (0,1)$.
\end{remark}

We construct a triplet $(A, T, G)$ as follows.
\begin{itemize}
\item We define a set $S$ and a vector space $T$ as
\begin{align}
\label{eq:S}
&S:= \{ {\bf 1}, \Xi_i, \I^n, \Xi_i \I^n, \I(\Xi_i), \Xi_i \I(\Xi_j)^n; 1 \leq i, j \leq d, n \in \mathbb{N} \},\\
\nonumber
&T := \langle S \rangle,
\end{align}
where $\langle S \rangle$ is a vector space spanned by elements in $S$.
\item A set $A \subset \mathbb{R}$ is given by
\[
	A := \{ |\tau| ; \tau \in S \},
\]
where a map $|\cdot| : S \to \mathbb{R}$ is defined as (\ref{eq:sec2:deg}) and
\[
	|\tau| = \sum_{\tau'} |\tau'|,
\]
where the sum is over all $\tau' \in \mathfrak{L}$ emerging in $\tau$.  For example,
\begin{align*}
	&|\Xi_i \I(\Xi_j)| = |\Xi_i| + |\I| + |\Xi_j| = \alpha_i + \alpha_j -1,\\
	&|\Xi_i \I(\Xi_j)^n|  = |\Xi_i| + n (|\I| + |\Xi_j|) = \alpha_i + n \alpha_j -1.
\end{align*}
\item A group $G:= \{ \Gamma_h; h = (h_0, h_1, \ldots, h_d) \in \mathbb{R}^{d+1} \}$ is given by
\begin{align}
	&\Gamma_h {\bf 1} = {\bf 1}, \,\, \Gamma_h \Xi_i = \Xi_i, \,\, \Gamma_h \I(\Xi_i) = \I(\Xi_i) + h_i {\bf 1}, \Gamma_h \I = \I + h_0 {\bf 1}, \\
	&\Gamma_h (\tau \cdot \tau') = \Gamma_h \tau \cdot \Gamma_h \tau'
\end{align}
for $\tau, \tau' \in S$ such that $\tau \cdot \tau' \in S$.
\end{itemize}
We can easily show that the triplet $(A, T, G)$ is a regularity structure.
An element in $S$ can be viewed as a rooted tree by the relations shown below (see Appendix~\ref{subsec:tree} for details of rooted trees).
\[
\begin{tikzpicture}[scale = 0.5]
\coordinate (t_0) at (0, 1.5);
\coordinate (t_1) at (4, 0.75);
\coordinate (t_2) at (4, 2.25);

\draw [thick] (t_1) to (t_2);
\draw (t_0) node{$\I$};
\draw (2, 1.5) node{$\longleftrightarrow$};
\draw (4.5, 1.5) node{$\I$};

\filldraw (t_1) circle [radius=1mm];
\filldraw (t_2) circle [radius=1mm];

\coordinate (t1_0) at (8, 1.5);
\coordinate (t2_0) at (12, 0.75);
\coordinate (t2_1) at (12, 2.25);

\coordinate (t1_1) at (10, 1.5);

\draw [dashed] (t2_0) to (t2_1);
\draw (t1_0) node{$\Xi_i$};
\draw (t1_1) node{$\longleftrightarrow$};
\draw (12.5, 1.5) node{$\Xi_i$};

\filldraw (t2_0) circle [radius=1mm];
\filldraw (t2_1) circle [radius=1mm];
\end{tikzpicture}
\]
\[
\begin{tikzpicture}[scale =0.5]
\coordinate (t1_0) at (0, 1.5);
\coordinate (t2_0) at (4, 0);
\coordinate (t2_1) at (4, 1.5);
\coordinate (t2_2) at (4, 3);

\coordinate (t1_1) at (2, 1.5);

\draw [thick] (t2_0) to (t2_1);
\draw [dashed] (t2_1) to (t2_2);
\draw (t1_1) node{$\longleftrightarrow$};
\draw (4.5, 0.75) node{$\I$};
\draw (4.5, 2.25) node{$\Xi_i$};
\draw (t1_0) node{$\I(\Xi_i)$};

\filldraw (t2_0) circle [radius=1mm];
\filldraw (t2_1) circle [radius=1mm];
\filldraw (t2_2) circle [radius=1mm];

\coordinate (t3_0) at (8, 1.5);
\coordinate (t4_0) at (13.5, 0);
\coordinate (t4_1) at (12, 1.5);
\coordinate (t4_2) at (15, 1.5);
\coordinate (t4_3) at (12, 3);

\draw [dashed] (t4_0) to (t4_2);
\draw [thick] (t4_0) to (t4_1);
\draw [dashed] (t4_1) to (t4_3);

\filldraw (t4_0) circle [radius=1mm];
\filldraw (t4_1) circle [radius=1mm];
\filldraw (t4_2) circle [radius=1mm];
\filldraw (t4_3) circle [radius=1mm];

\draw (t3_0) node{$\Xi_i \I(\Xi_j)$};
\draw (10.25, 1.5) node{$\longleftrightarrow$};
\draw (12.5, 0.5) node[left]{$\I$};
\draw (14.5, 0.5) node[right]{$\Xi_i$};
\draw (11.5, 2.25) node{$\Xi_j$};

\end{tikzpicture}
\]
\[
\begin{tikzpicture}[scale=0.5]
\draw (-2, 1.5) node{$\Xi_i \I(\Xi_j)^n$};
\draw (0.5, 1.5) node{$\longleftrightarrow$};

\draw (1.5, 2.25) node{$\Xi_j$};
\draw (5.5, 2.25) node{$\Xi_j$};
\draw (2.5, 0.75) node{$\I$};
\draw (5.5, 0.75) node{$\I$};
\draw (7, 0.75) node{$\Xi_i$};

\draw (3.5, 4) node{$\overbrace{\hspace{20mm}}^{n\mathrm{-times}}$};
\draw (3.5, 2.25) node{$\cdots$};

\coordinate (t0) at (2, 3);
\coordinate (t1) at (2, 1.5);
\coordinate (t2) at (5, 0);
\coordinate (t3) at (5, 1.5);
\coordinate (t4) at (5, 3);
\coordinate (t5) at (7, 1.5);

\filldraw (t0) circle [radius=1mm];
\filldraw (t1) circle [radius=1mm];
\filldraw (t2) circle [radius=1mm];
\filldraw (t3) circle [radius=1mm];
\filldraw (t4) circle [radius=1mm];
\filldraw (t5) circle [radius=1mm];

\draw [thick] (t2) to (t1);
\draw [thick] (t2) to (t3);
\draw [dashed] (t2) to (t5);
\draw [dashed] (t1) to (t0);
\draw [dashed] (t3) to (t4);

\end{tikzpicture}
\]
\subsection{Model on the Regularity Structure}
\label{sec2:subsec:model}
We construct a model on the regularity structure $(A, T, G)$ to translate an abstract symbol in $S$ into a concrete function.
We define ${\bf \Pi} : T \to C^{\infty}(\mathbb{R})$ as 
\begin{align*}
	&{\bf \Pi} ({\bf 1}) = 1, \,\, {\bf \Pi} (\Xi_i)(t) = \dot{\xi}_i(t), \,\, {\bf \Pi}(\I(\Xi_i))(t) = \xi_i(t), {\bf \Pi}(\I)(t)= t, \\
	&{\bf \Pi}(\tau \cdot \tau') = {\bf \Pi}(\tau) \cdot {\bf \Pi}(\tau').
\end{align*}
Now, we define a model on $(A, T, G)$ by using the map ${\bf \Pi}$.  The pair $(\Pi, \Gamma)$ is given by the following.
\begin{itemize}
	\item For $s \in \mathbb{R}$, a map $\Pi_s: T \to \mathbb{R}$ is defined as
		\begin{align*}
		&\Pi_s {\bf 1} = 1, \,\, \Pi_s \Xi_i (t) = {\bf \Pi}(\Xi_i)(t),\\
		&\Pi_s (\I) (t) = {\bf \Pi}(\I)(t) - {\bf \Pi}(\I)(s),\\
		&\Pi_s (\I(\Xi_i))(t) = {\bf \Pi}(\I(\Xi_i))(t) - {\bf \Pi}(\I(\Xi_i))(s),\\
		&\Pi_s (\tau \cdot \tau') (t) = \Pi_s (\tau)(t) \cdot \Pi_s (\tau') (t)
		\end{align*}
		for $\tau, \tau' \in S$ such that $\tau \cdot \tau' \in S$, and the domain is extended by imposing linearity.
	\item For $s, t \in \mathbb{R}$, a map $\Gamma_{ts} : T \to T$ is defined as
		\begin{align*}
		&\Gamma_{ts} {\bf 1} = {\bf 1}, \,\, \Gamma_{ts} (\Xi_i) = \Xi_i, \\
		&\Gamma_{ts} (\I) = \I + ({\bf \Pi}(\I)(t) - {\bf \Pi}(\I(s))) {\bf 1},\\
		&\Gamma_{ts} (\I(\Xi_i)) = \I(\Xi_i) + ({\bf \Pi}(\I(\Xi_i)(t) - {\bf \Pi}(\I(\Xi_i))(s) ) {\bf 1}, \\
		&\Gamma_{ts} (\tau \cdot \tau') = \Gamma_{ts} (\tau) \cdot \Gamma_{ts}(\tau')
		\end{align*}
		for $\tau, \tau' \in S$ such that $\tau \cdot \tau' \in S$, and the domain is extended by imposing linearity.
\end{itemize}

\subsection{BPHZ Renormalization}
We revise the model $(\Pi, \Gamma)$ defined in Section~\ref{sec2:subsec:model} along the lines of the discussion in \cite{AlgRenormRS}.
First, we introduce negative renormalization in a regularity structure.
As follows, we define a set $T_-$ that plays a central role in negative renormalization:
\begin{align}
	\label{eq:S-}
	&S_- := \{ \tau_1 \sqcup \cdots \sqcup \tau_n; \tau_i \in S\setminus \{ {\bf 1} \}, n \in \mathbb{N} \} \cup \{ {\bf 1} \},\\
	\label{eq:T-}
	&\hat{T}_- := \langle S_- \rangle,
\end{align}
where $\sqcup$ is the disjoint union in set theory.  By convention, we replace $\bullet$ with $\sqcup$, which is called a forest product in \cite{AlgRenormRS}.

\begin{definition}
\begin{enumerate}
\item Let $\tau \in \hat{T}_-$.  Let $A$ be a subforest of $\tau$, that is, $A = {\bf 1}$ or $A$ consists of edges included in $\tau$.  Then, we define $R_A \tau \in \hat{T}_-$ as a forest obtained by trimming edges of $A$ from $\tau$.  $R_A \tau$ is called a contraction of $\tau$ by $A$.
\item We define a map $\Delta_-: \hat{T}_- \to \hat{T}_- \otimes \hat{T}_-$ as
\begin{equation}
	\Delta_- \tau := \sum_{A \subset \tau; A \in S_-} A \otimes R_A \tau, \,\,\,\, \tau \in S_-,
\end{equation}
where $A \subset \tau$ is a subforest of $\tau$ and $R_A \tau$ is the contraction of $\tau$ by $A$, and 
\[
	\Delta_- \left(\sum_{i=1}^n a_i \tau_i\right) := \sum_{i=1}^n a_i \Delta_- \tau_i,
\]
where $a_1, \ldots, a_n \in \mathbb{R}$ and $\tau_1, \ldots, \tau_n \in S_-$.  The restriction $\Delta_-|_T: T \to \hat{T}_- \otimes T$ denotes simply $\Delta_-$ unless confusion occurs.
\end{enumerate}
\end{definition}

\begin{example}
\begin{enumerate}
\item Let $\tau = \Xi_1 \I(\Xi_2)$, $A = \Xi_1$, and $B= \Xi_1 \bullet \Xi_2$.  Then, we have
\[
	R_A \tau = \I(\Xi_2), \,\, R_B \tau = \I
\]
because $R_A \tau$ and $R_B \tau$ are obtained by trimming the edge $\Xi_1$ and $\Xi_1 \bullet \Xi_2$, respectively.
\item We have
\begin{align*}
	\Delta_- \Xi_1 &= 1 \otimes \Xi_1 + \Xi_1 \otimes 1, \\
	\Delta_- (\I(\Xi_1)) &= 1 \otimes \I(\Xi_1) + \I \otimes \Xi_1 + \Xi_1 \otimes \I + \I(\Xi_1) \otimes 1, \\
	\Delta_- (\Xi_1\I(\Xi_1)) &= 1 \otimes \Xi_1 \I(\Xi_1) + \Xi_1 \otimes \I(\Xi_1) + \Xi_1 \otimes \Xi_1 \I + \I(\Xi_1) \otimes \Xi_1\\
	&+ \I(\Xi_1) \otimes \Xi_1+ \Xi_1 \I \otimes \Xi_1 + \Xi_1 \bullet \Xi_1 \otimes \I + \Xi_1 \I(\Xi_1).
\end{align*}
\end{enumerate}
\end{example}

\begin{definition}
\begin{enumerate}
\item A subspace $J_+ \subset T_-$ is defined as
	\[
		J_+ := \langle \{ \tau \in S_-; \tau = \sigma \bullet \bar{\sigma}, \,\, \sigma, \bar{\sigma} \in S_-, \,\, |\sigma| \geq 0 \} \rangle.
	\]
\item A quotient space $T_-$ is defined as $T_- := \hat{T}_- / J_+$, and $\mathfrak{p}^{\mathrm{ex}}_{-} : \hat{T}_{-} \to T_{-}$ denotes the canonical projection.
\item We define a map $\delexminus : T \to T_- \otimes T$ as $\delexminus := (\pexminus \otimes \mathrm{Id}) \Delta_-$.
\end{enumerate}
\end{definition}
The map $\delexminus$ takes forests including a tree with negative degree, which is ill-posed in taking a limit, from a whole tree.  BPHZ renormalization involves finding ill-posed symbols and adding extra terms there to prevent divergence, so $\delexminus$ becomes a part of BPHZ renormalization and plays a role in finding these symbols.

\begin{definition}
\label{def:BPHZ}
For the random linear map ${\bf \Pi}: T \to C^{\infty}(\mathbb{R})$ defined above, we define the following.
\begin{enumerate}
\item A character $\gminus$ on $\hat{T}_-$ is defined inductively as
\begin{align*}
&\gminus (\tau) = E[{\bf \Pi}\tau(0) ], \,\,\,\, \tau \in T ,\\
&\gminus (\tau \bullet \tau') = \gminus(\tau) \cdot \gminus(\tau'), \,\,\,\, \tau, \tau' \in \hat{T}_-.
\end{align*}
\item A twisted negative antipode $\taexminus : T_- \to \hat{T}_-$ is defined recursively as
\begin{align*}
& \taexminus 1 = 1, \\
& \taexminus \tau = - \htprominus ( \taexminus \otimes \mathrm{Id}) ( \delexminus \iexminus \tau - \tau \otimes 1), \,\,\,\, \tau \in T_- \setminus \{ 1 \},
\end{align*}
where $\htprominus$ maps $(\tau, \tau') \in \hat{T}_- \times \hat{T}_-$ to $\tau \bullet \tau' \in \hat{T}_-$, and $\iexminus : T_- \to \hat{T}_-$ is a canonical injective.
\item A pair $(\hat{\Pi}, \hat{\Gamma})$ is defined as
\begin{align}
\label{eq:pirenom}
&\hat{\Pi}_z \tau = (\gminus \taexminus \otimes \Pi_z) \delexminus \tau, \\ 
\label{eq:gamrenom}
&\hat{\Gamma}_{z\bar{z}} \tau = \Gamma_{z \bar{z}} \tau.
\end{align}
This pair is actually a model on $T$ by \cite[Theorem 6.16]{AlgRenormRS}, and we call this  the {\it BPHZ-renormalized model}.
\end{enumerate}
\end{definition}

\begin{remark}
In \cite{AlgRenormRS}, $\hat{\Gamma}_{z \bar{z}}$ is given by
\[
	\hat{\Gamma}_{z\bar{z}} \tau = (\mathrm{Id} \otimes \gamma_{z\bar{z}} (\gminus \otimes \mathrm{Id}) \delexminus) \delexplus \tau,
\]
so  it is not obvious that $\hat{\Gamma}$ is equal to $\Gamma$.  For the proof of that equality, see the Appendix~\ref{subsec:gamma}.
\end{remark}

Now, we give a plain expression of the BPHZ-renormalized model in a branched rough path, which is the central result herein.
\begin{theorem}
\label{thm:BPHZplain}
Assume that for any $a_1, \ldots, a_d, b_1, \ldots, b_d \in \mathbb{R}$, $\sum_{i=1}^d a_i \dot{\xi}_i(0) + \sum_{j=1}^d b_j\xi_j(0) $ is a normal distribution with zero mean.  
Then, for any $1 \leq i \leq d$, we have
\begin{equation}
	\label{eq:plaineq1}
	\hat{\Pi}_z (\Xi_i) = \Pi_z (\Xi_i), \,\, \hat{\Pi}_z (\I(\Xi_i)^n) = \Pi_z (\I(\Xi_i)^n).
\end{equation}
In addition, if $| \Xi_i \I(\Xi_j)^n| < 0$ for $1 \leq i ,j \leq d$, and $n \in \mathbb{N}$, then we have
\begin{equation}
	\label{eq:piplain}
	\hat{\Pi}_z (\Xi_i \I(\Xi_j)^n) = \Pi_z (\Xi_i \I(\Xi_j)^n) - n E[\dot{\xi}_i(0) \xi_j(0)] \Pi_z (\I(\Xi_j)^{n-1}).
\end{equation}
\end{theorem}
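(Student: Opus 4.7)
My plan is to unwind the BPHZ formula $\hat{\Pi}_z \tau = (\gminus \taexminus \otimes \Pi_z)\, \delexminus \tau$ directly on each target symbol, writing $\tilde{g} := \gminus \circ \taexminus$ for brevity. Because $\gminus$ is a character on $\htexminus$ and $\taexminus$ implements the convolution inverse, $\tilde{g}$ is again a character on $\htexminus$, and the zero-mean hypothesis gives $\tilde{g}(\Xi_k) = -E[\dot{\xi}_k(0)] = 0$ for every $k$. By multiplicativity, $\tilde{g}$ therefore annihilates every forest carrying an isolated $\Xi_k$ factor. The first step is to enumerate the subforests of each target tree whose components all lie in $\texminus$ (equivalently, all carry negative degree); since the only elements of $S$ with negative degree are $\Xi_k$ and $\Xi_i \I(\Xi_j)^m$ (with $\alpha_i + m\alpha_j < 1$), the enumeration is very short.

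For $\Xi_i$ the admissible subforests are ${\bf 1}$ and $\Xi_i$, so $\hat{\Pi}_z \Xi_i = \Pi_z \Xi_i$. For $\I(\Xi_i)^n$ they are $\Xi_i^{\sqcup k}$ for $k = 0, \ldots, n$ (any $\I$ or $\I(\Xi_i)$ component has positive degree and is excluded), and only $k=0$ survives $\tilde g$, giving $\hat{\Pi}_z \I(\Xi_i)^n = \Pi_z \I(\Xi_i)^n$. For $\tau = \Xi_i \I(\Xi_j)^n$ they consist of the empty forest, together with $\Xi_i \I(\Xi_j)^b \sqcup \Xi_j^{\sqcup a}$ for $0 \le b \le n$, $0 \le a \le n-b$ (with multiplicity $\binom{n}{b}\binom{n-b}{a}$), and $\Xi_j^{\sqcup a}$ without a root component. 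Killing all subforests with $a \ge 1$ or with a stand-alone $\Xi_i$ (the $b=0$ case), we arrive at
\[
\hat{\Pi}_z \Xi_i \I(\Xi_j)^n = \Pi_z\bigl(\Xi_i \I(\Xi_j)^n\bigr) + \sum_{b=1}^n \binom{n}{b}\, \tilde{g}\bigl(\Xi_i \I(\Xi_j)^b\bigr)\, \Pi_z\bigl(\I(\Xi_j)^{n-b}\bigr).
\]

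It then remains to evaluate $\tilde{g}(\Xi_i \I(\Xi_j)^b)$. For $b=1$ the antipode recursion, combined with $\gminus(\I) = {\bf \Pi}(\I)(0) = 0$ and $\gminus(\I(\Xi_j)) = E[\xi_j(0)] = 0$, collapses to $\tilde{g}(\Xi_i \I(\Xi_j)) = -E[\dot{\xi}_i(0)\xi_j(0)]$. For $b \ge 2$ I argue by strong induction from the convolution identity
\[
\tilde{g}\bigl(\Xi_i \I(\Xi_j)^b\bigr) = -\gminus\bigl(\Xi_i \I(\Xi_j)^b\bigr) - \sum_{k=1}^{b-1} \binom{b}{k}\, \tilde{g}\bigl(\Xi_i \I(\Xi_j)^k\bigr)\, \gminus\bigl(\I(\Xi_j)^{b-k}\bigr),
\]
obtained after discarding the right-hand subforests on which $\tilde g$ is trivially zero. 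The induction hypothesis collapses the sum to its $k=1$ term, and the claim $\tilde g(\Xi_i \I(\Xi_j)^b) = 0$ reduces to the Isserlis identity
\[
E[\dot{\xi}_i(0) \xi_j(0)^b] = b\, E[\dot{\xi}_i(0) \xi_j(0)]\, E[\xi_j(0)^{b-1}]
\]
for a zero-mean jointly Gaussian family (both sides vanish for $b$ even; for $b$ odd both equal $b(b-2)!!\,E[\dot{\xi}_i(0)\xi_j(0)]\,E[\xi_j(0)^2]^{(b-1)/2}$). Substituting $\tilde g(\Xi_i \I(\Xi_j)) = -E[\dot{\xi}_i(0)\xi_j(0)]$ and $\tilde g(\Xi_i \I(\Xi_j)^b) = 0$ for $b\ge2$ into the display of the previous paragraph produces (\ref{eq:piplain}). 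The main technical obstacle is the bookkeeping in this last step: one must verify that the admissible subforest enumeration, the antipode recursion, and the Isserlis pairing count match consistently, and if the direct recursion becomes fragile a cleaner alternative is to identify $\tilde g$ with minus the truncated joint cumulant character and invoke the vanishing of Gaussian cumulants of order $\ge 3$.
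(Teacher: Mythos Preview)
Your proposal is correct and follows essentially the same route as the paper's own proof: enumerate the negative-degree subforests contributing to $\delexminus\tau$, use multiplicativity of $\gminus\taexminus$ to kill every forest containing an isolated $\Xi_k$ factor, and then prove by induction (via the antipode recursion together with the Isserlis/Wick identity) that $\gminus\taexminus(\Xi_i\I(\Xi_j)^b)=0$ for $b\ge 2$, leaving only the $b=1$ contribution. The paper packages the inductive step as a separate lemma (Lemma~\ref{lem:gminus}) and is terser about the subforest bookkeeping, but the logical skeleton is identical; your aside about recognising $\tilde g$ as a cumulant character is an alternative the paper does not pursue.
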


Before proving this theorem, we prepare the following lemmas.

\begin{lemma}
\label{thm:wick}
Let random variables $X_1, \ldots, X_n$ be centered jointly normal variables.  Then we have
\begin{equation}
	E \left[ \prod_{i=1}^n X_i \right] = \sum \prod_k \left[ X_{i_k}X_{j_k} \right],
\end{equation}
where the sum is taken over the set of disjoint partitions $\{ i_k, j_k \}$ of $\{ 1, \ldots, n \}$.  In particular, the left-hand side of the above expression is identical to zero if $n$ is odd.
\end{lemma}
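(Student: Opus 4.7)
The plan is to compute the joint moment generating function (MGF) of $(X_1, \dots, X_n)$ in two different ways and match the coefficient of $t_1 t_2 \cdots t_n$. Set $\sigma_{ij} := E[X_i X_j]$ and
\[
\phi(t) := E\!\left[\exp\!\left(\sum_{i=1}^n t_i X_i\right)\right], \qquad t = (t_1, \dots, t_n) \in \mathbb{R}^n.
\]
Since $\sum_i t_i X_i$ is itself a centered Gaussian with variance $\sum_{i,j} \sigma_{ij} t_i t_j$, the standard computation of the MGF of a scalar Gaussian yields $\phi(t) = \exp\!\bigl(\tfrac12 \sum_{i,j} \sigma_{ij} t_i t_j\bigr)$. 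On the other hand, Taylor-expanding $\phi$ at the origin gives $\phi(t) = \sum_{\alpha} \frac{t^\alpha}{\alpha!}\, E[X^\alpha]$ in multi-index notation, so the coefficient of the monomial $t_1 t_2 \cdots t_n$ (i.e. the multi-index $\alpha = (1,\dots,1)$) equals exactly $E[X_1 X_2 \cdots X_n]$. The whole proof thus reduces to reading off this coefficient from the exponential expression for $\phi$.

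Next, I expand
\[
\phi(t) = \sum_{k=0}^\infty \frac{1}{k!\, 2^k} \left( \sum_{i,j=1}^n \sigma_{ij} t_i t_j \right)^{\!k}
= \sum_{k=0}^\infty \frac{1}{k!\, 2^k} \sum_{i_1,j_1,\dots,i_k,j_k} \sigma_{i_1 j_1} \cdots \sigma_{i_k j_k}\, t_{i_1} t_{j_1} \cdots t_{i_k} t_{j_k}.
\]
Each summand has total $t$-degree $2k$, so when $n$ is odd no term contributes a monomial of degree $n$ in $t$, which immediately forces $E[X_1 \cdots X_n] = 0$ and disposes of the odd case. For $n = 2k$ even, the monomial $t_1 t_2 \cdots t_n$ arises precisely when the tuple $(i_1, j_1, \dots, i_k, j_k)$ is a permutation of $(1, \dots, n)$; equivalently, the $k$ unordered pairs $\{i_\ell, j_\ell\}$ form a pair partition of $\{1, \dots, n\}$.

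The crux is the combinatorial counting. For each unordered pair partition $\{\{a_1, b_1\}, \dots, \{a_k, b_k\}\}$ of $\{1, \dots, n\}$, the number of ordered tuples $(i_1, j_1, \dots, i_k, j_k)$ representing it is $k!\, 2^k$: there are $k!$ ways to order the $k$ pairs and $2^k$ choices of which element of each pair is the "$i$" and which is the "$j$". Since $\sigma_{ij} = \sigma_{ji}$, every such ordered tuple contributes the same product $\prod_\ell \sigma_{a_\ell b_\ell}$. This overcounting factor exactly cancels $1/(k!\, 2^k)$, giving
\[
E[X_1 \cdots X_n] = \sum_{P} \prod_{\{a,b\} \in P} \sigma_{ab},
\]
where the sum runs over all pair partitions $P$ of $\{1, \dots, n\}$, which is the claimed Wick formula.

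The principal technical point I expect to spell out carefully is the combinatorial counting of $k!\, 2^k$ ordered tuples per pair partition; besides that, one must briefly justify differentiating under the expectation (or equivalently, identifying Taylor coefficients) to pass from $\phi(t)$ to the mixed moment, which is immediate from analyticity of the MGF of a Gaussian on all of $\mathbb{R}^n$. No deeper machinery is needed beyond the explicit form of the Gaussian MGF.
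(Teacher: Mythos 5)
Your proof is correct, but it is genuinely different from what the paper does: the paper does not prove this lemma at all, it simply cites Theorem~1.28 of Janson's \emph{Gaussian Hilbert Spaces}, where the result (Isserlis'/Wick's theorem) is established. Your moment-generating-function argument is the standard self-contained route: identifying $E[X_1\cdots X_n]$ as the coefficient of $t_1\cdots t_n$ in $\exp\bigl(\tfrac12\sum_{i,j}\sigma_{ij}t_it_j\bigr)$, noting that only even total degrees occur (which disposes of odd $n$), and cancelling the overcounting factor $k!\,2^k$ against the $1/(k!\,2^k)$ from the exponential series. The counting of $k!\,2^k$ ordered tuples per pair partition is right, and the interchange of expectation and Taylor expansion is unproblematic since the Gaussian MGF is finite and analytic on all of $\mathbb{R}^n$. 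What your approach buys is a short, elementary, fully self-contained proof; what the citation buys the paper is brevity and a pointer to a more general framework (Wick products and Gaussian chaos) that is thematically close to the renormalization machinery used later. Either is acceptable here; if you wanted to match the paper exactly, a one-line reference would suffice, but your argument would serve equally well as an appendix proof.
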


\begin{proof}
See~\cite[Theorem 1.28]{janson1997gaussian}.
\end{proof}

\begin{lemma}
\label{lem:gminus}
If $| \Xi_i \I(\Xi_j)^n| < 0$ for $1 \leq i ,j \leq d$, and $n \geq 2$, then we have
\begin{align}
\label{eq:gminusplain}
\gminus \taexminus  \left(\Xi_i \I(\Xi_j)^n \right) &= \sum_{l=0}^n \binom{n}{l} \gminus \htprominus (\taexminus \otimes \mathrm{Id}) (\Xi_i \I(\Xi_j)^l \otimes \I(\Xi_j)^{n-l}) \\
\label{eq:gminus}
&= 0.
\end{align}
\end{lemma}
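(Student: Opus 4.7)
The plan is to prove both equalities by unfolding the recursive definition of $\taexminus$ and then exploiting the centered-Gaussian hypothesis via Wick's theorem (Lemma~\ref{thm:wick}).

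First, applying the recursion in Definition~\ref{def:BPHZ} and then $\gminus$ to $\tau = \Xi_i \I(\Xi_j)^n$ yields the consistency identity $\gminus \htprominus(\taexminus \otimes \mathrm{Id})\,\delexminus \iexminus \tau = 0$. I would expand $\Delta_-(\Xi_i \I(\Xi_j)^n)$ by enumerating subforests $A$. Each of the $n$ identical $\I(\Xi_j)$ branches admits three possibilities: fully included in $A$, fully excluded, or split (only the $\I$-edge or only the $\Xi_j$-edge kept). A split branch produces either a bare $\I$ inside $R_A \tau$ or an orphan $\Xi_j$ component, and both kill the contribution under $\gminus$, since ${\bf \Pi}(\I)(0) = 0$ and $\dot\xi_j(0)$ is centered by the Gaussian hypothesis. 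The remaining all-or-nothing configurations split into two families: $A = \Xi_i \I(\Xi_j)^l$ with complement $\I(\Xi_j)^{n-l}$ (multiplicity $\binom{n}{l}$ from the choice of branches), and $A = \I(\Xi_j)^l$ with complement $\Xi_i \I(\Xi_j)^{n-l}$. The first family assembles into the binomial sum in the statement, giving the first equality; both families together exhaust the nonzero contributions to the consistency identity.

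For the vanishing of the sum, write $S_n + T_n = 0$, where $S_n$ is the lemma's sum and $T_n$ is the parallel sum over the second family. Applying the same analysis to $\I(\Xi_j)^l$ in place of $\Xi_i \I(\Xi_j)^n$ produces the binomial convolution $\sum_{m=0}^l \binom{l}{m} a_m E[\xi_j(0)^{l-m}] = 0$ for $l \geq 1$ with $a_0 = 1$, where $a_l := \gminus \taexminus(\I(\Xi_j)^l)$. This identifies the $a_l$ with Hermite coefficients for the Gaussian $\xi_j(0)$, and consequently $T_n = E[\dot\xi_i(0)\,{:}\xi_j(0)^n{:}]$, where ${:}\cdot{:}$ denotes the Wick power. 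By Wick's theorem, this expectation vanishes for $n \geq 2$: the only nontrivial pairing would pair $\dot\xi_i(0)$ with one $\xi_j(0)$-factor, but the Wick power is orthogonal to any monomial of lower degree in $\xi_j(0)$. Hence $T_n = 0$, and so $S_n = -T_n = 0$, which is the second equality.

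The main obstacle is the combinatorial bookkeeping: precisely identifying which subforests in $\Delta_-(\Xi_i \I(\Xi_j)^n)$ survive $\pexminus$ and yield nonzero $\gminus$-evaluation, and then justifying the Hermite--Wick identification of the $a_l$ together with the orthogonality of the Wick power against a single Gaussian factor. Once the first step is in hand, the Wick-combinatorial step follows from standard Gaussian calculus.
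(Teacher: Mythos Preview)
Your route to \eqref{eq:gminusplain} is essentially the paper's: enumerate subforests of $\Xi_i\I(\Xi_j)^n$, discard those carrying a lone $\Xi$-factor (via $\gminus\taexminus\Xi_k=0$) or a bare $\I$, and read off the binomial sum coming from the family $A=\Xi_i\I(\Xi_j)^l$.

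The argument for \eqref{eq:gminus}, however, has a genuine gap. Your ``second family'' $A=\I(\Xi_j)^l$ with $l\ge1$ does not survive $\pexminus$: since $|\I(\Xi_j)^l|=l\alpha_j>0$, these forests lie in $J_+$ and are zero in $T_-$. Hence in the consistency identity $\gminus\htprominus(\taexminus\otimes\mathrm{Id})\delexminus\iexminus\tau=0$ only the $l=0$ member of that family (namely $A=\mathbf{1}$) contributes, so your $T_n$ collapses to $E[\dot\xi_i(0)\xi_j(0)^n]$ rather than a Wick-power expectation. The same obstruction voids your recursion for $a_l=\gminus\taexminus(\I(\Xi_j)^l)$: for $l\ge1$ the argument is zero in $T_-$, so $a_l=0$, and the convolution identity you write would then force $E[\xi_j(0)^l]=0$ for all $l\ge1$, which is false for even $l$. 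The Hermite identification therefore never gets off the ground, and you cannot conclude $S_n=0$ from $S_n+T_n=0$.

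The paper proves \eqref{eq:gminus} by direct induction on $n$. Assuming $\gminus\taexminus(\Xi_i\I(\Xi_j)^l)=0$ for $2\le l<n$, the recursion collapses to the single identity
\[
nE[\dot\xi_i(0)\xi_j(0)]\,E[\xi_j(0)^{n-1}]=E[\dot\xi_i(0)\xi_j(0)^{n}],
\]
which is immediate from Lemma~\ref{thm:wick}. Your Wick-orthogonality intuition is in fact the right endgame---this identity is precisely the statement $E[\dot\xi_i(0)\,{:}\xi_j(0)^n{:}]=0$---but it is reached through the inductive collapse of the $\Xi_i\I(\Xi_j)^l$ recursion, not through a separate family of positive-degree subforests.
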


\begin{proof}

First, we show (\ref{eq:gminusplain}).  Because $\gminus$ and $\taexminus$ are multiplicative in terms of the forest product $\bullet$, for $1 \leq i \leq d$ we have 
\begin{align}
	\gminus \taexminus (\Xi_i \bullet \tau) &= \gminus \taexminus (\Xi_i) \cdot \gminus \taexminus (\tau) \nonumber \\
	&= -\gminus (\Xi_i)  \cdot \gminus \taexminus (\tau) \nonumber \\
	\label{eq:gminus0}
	&= 0.
\end{align}
We denote $\delexminus  \iexminus (\Xi_i \I(\Xi_j)^n)$ by $\delexminus \iexminus (\Xi_i \I(\Xi_j)^n) = \sum_p \tau_1^{(p)} \otimes \tau_2^{(p)}$, where $\tau_1^{(p)} \in T_-$ and $\tau_2^{(p)} \in \hat{T}_-$. 
Because $|\Xi_i \I(\Xi_j)^n| = |\tau_1^{(p)}| + |\tau_2^{(p)}|$, we have the decomposition $\tau_1^{(p)} = \tau_1' \bullet \cdots \bullet \tau_m'$ such that it satisfies either of the following conditions (a) and (b):
\begin{enumerate}
\renewcommand{\labelenumi}{(\alph{enumi})}
\item for any $1 \leq s \leq m$, $\tau_s' = \Xi_i$ or $\tau_s' = \Xi_j$;
\item we have $m=1$ and $\tau_1' = \Xi_i \I(\Xi_j)^l \,\ (1 \leq l \leq n)$.
\end{enumerate}
By this result and (\ref{eq:gminus0}), we have
\begin{align*}
	\gminus \taexminus (\Xi_i \I(\Xi)^n) &= - \htprominus (\taexminus \otimes \mathrm{Id}) (\delexminus \iexminus (\Xi_i \I(\Xi_j)^n) - \Xi_i \I(\Xi_j)^n \otimes {\bf 1})  \\
	&= \sum_{l=0}^n \binom{n}{l} \gminus \htprominus (\taexminus \otimes \mathrm{Id}) (\Xi_i \I(\Xi_j)^l \otimes \I(\Xi_j)^{n-l}),
\end{align*}
which is identical to (\ref{eq:gminusplain}).
\par
We show (\ref{eq:gminus}) by induction.  We begin by considering the case of $n =2$.  
By (\ref{eq:gminusplain}), we have
\begin{align*}
\gminus \taexminus (\Xi_i \I(\Xi_j)^2) &= -2 \gminus \htprominus (\taexminus \otimes \mathrm{Id}) (\Xi_i \I(\Xi_j) \otimes \I(\Xi_j))- \gminus(\Xi_i \I(\Xi_j)^2) \\
&= 2\gminus(\Xi_i \I(\Xi_j) \bullet \I(\Xi_j)) - \gminus(\Xi_i \I(\Xi_j)^2) \\
&= 2E[\dot{\xi}_i(0) \xi_j(0)] E[\xi_j(0)] - E[ \dot{\xi}_i(0) \xi_j(0)^2].
\end{align*}
By Theorem~\ref{thm:wick}, we have $E[\xi_j(0)] =  E[ \dot{\xi}_i(0) \xi_j(0)^2] = 0$ so that $\gminus \taexminus (\Xi_i \I(\Xi_j))=0$.  
Next, we consider (\ref{eq:gminus}) with $n \leq n_0$.  Then, by (\ref{eq:gminusplain}), we have
\begin{align*}
\gminus \taexminus  \left(\Xi_i \I(\Xi_j)^{n_0 +1} \right) &=  -\sum_{l=0}^{n_0+1} \binom{n_0+1}{l} \gminus \htprominus(\taexminus \otimes \mathrm{Id}) (\Xi_i \I(\Xi_j)^l \otimes \I(\Xi_j)^{n_0+1-l})\\
&- \gminus (\Xi_i \I(\Xi_j)^{n_0+1}).
\end{align*}
Using the assumption by induction, we have
\begin{align*}
\gminus \taexminus  \left(\Xi_i \I(\Xi_j)^{n_0 +1} \right) &= -(n_0+1)(\gminus \otimes \gminus)(\taexminus \otimes \mathrm{Id})(\Xi_i \I(\Xi_j) \otimes \I(\Xi_j)^{n_0}) \\
&- \gminus (\Xi_i \I(\Xi_j)^{n_0+1}) \\
&= (n_0+1) E[\dot{\xi}_i(0) \xi_j(0)] E[\xi_j(0)^{n_0}] - E[\dot{\xi}_i(0) \xi_j(0)^{n_0+1}].
\end{align*}
If $n_0 + 1$ is even, then we have $E[\xi_j(0)^{n_0+1}] = E[\dot{\xi}_i(0) \xi_j(0)^{n_0+1}] = 0$.  If $n_0 + 1$ is odd, then by Lemma~\ref{thm:wick} we have $(n_0+1) E[\dot{\xi}_i(0) \xi_j(0)] E[\xi_j(0)^{n_0+1}] = E[\dot{\xi}_i(0) \xi_j(0)^{n_0+1}]$.  Thus, we have $\gminus \taexminus  \left(\Xi_i \I(\Xi_j)^{n_0 +1} \right) = 0$.
\end{proof}

\begin{proof}[Proof of Theorem~\ref{thm:BPHZplain}]
First, we show (\ref{eq:plaineq1}).
Because $\delexminus \Xi_i = \Xi_i \otimes {\bf 1} + {\bf 1} \otimes \Xi_i$ and $\gminus \Xi_i = E[\dot{\xi}_i(0) ] = 0$, we have
\[
	\hat{\Pi}_z (\Xi_i) = \Pi_z (\Xi_i).
\]
We calculate $\delexminus (\I(\Xi_i)^n)$.  Because there exists no tree $\tau \subset \I(\Xi_i)^n$ such that $|\tau| < 0$ and $\gminus \taexminus \tau \neq 0$, we have
\begin{align*}
	\hat{\Pi}_z (\I(\Xi_i)^n) &= (\gminus \taexminus \otimes \Pi_z) \delexminus (\I(\Xi_i)^n) \\
	&= (\gminus \taexminus \otimes \Pi_z) ({\bf 1} \otimes \I(\Xi_i)^n)\\
	&= \Pi_z (\I(\Xi_i)^n).
\end{align*}
\par
Next, we show (\ref{eq:piplain}). 
By using (\ref{eq:gminus}) in Lemma~\ref{lem:gminus}, we have
\begin{align*}
\hat{\Pi}_z(\Xi_i \I(\Xi_j)^n) &= \sum_{l=1}^n \binom{n}{l}(\gminus \taexminus \otimes \Pi_z) (\Xi_i \I(\Xi_j)^l \otimes \I(\Xi_j)^{n-l}) + \Pi_z(\Xi_i \I(\Xi_j)^n) \\
&= n(\gminus \taexminus \otimes \Pi_z) (\Xi_i \I(\Xi_j) \otimes \I(\Xi_j)^{n-1}) + \Pi_z(\Xi \I(\Xi_j)^n) \\
&= \Pi_z (\Xi_i \I(\Xi_j)^n) - n E[\dot{\xi}_i(0) \xi_j(0)] \Pi_z (\Xi_i \I(\Xi_j)^{n-1}),
\end{align*}
and thus  we deduce the conclusion.
\end{proof}

\section{Application to a Rough Volatility Model}
In this section, we provide an example of applying BHPZ renormalization to rough paths.  As discussed in \cite{RoughVolRS} and \cite{SingRP}, we set a regularity structure compatible with a rough volatility model in a financial market.  
In \cite{RoughVolRS}, the following integral is considered:
\begin{equation}
	\label{eq:roughint}
	\int_0^T f({W}_s^H) dW_s,
\end{equation}
where $W_t$ is a Brownian motion and $W_t^H$ is a fractional Brownian motion with Hurst parameter $H \in (0, 1/2]$.   Here, we define the fractional Brownian motion in Riemann--Liouville form, that is, $W_t^H:= \sqrt{2H}\int_0^t K^H(t-s) dW_s$ where $K^H(t) = |t|^{H-1/2}$.

\begin{remark}
We do not directly define the model using fractional Brownian motion $W^H$ as a driving noise because $W^H$ does not possess stationarity. Instead, as in \cite{SingRP}, we use a stationary noise $\hat{W}$ to approximate $W^H$.  
Here, the process $\hat{W}$ is defined as $\hat{W} = \hat{K}^H \ast \xi$, where $\xi$ is a white noise on $\mathbb{R}$, and the kernel $\hat{K}^H$ is smooth on $\mathbb{R} \setminus \{ 0 \} \to \mathbb{R}$ and satisfies the following properties:
\begin{enumerate}
	\item $\hat{K}^H = K^H$ on $[0, T]$ and $\supp \hat{K}^H \subset [0, 2T]$;
	\item there exists a constant $C>0$ such that for $k=0, 1, 2$, we have
	\[
		| \partial^k \hat{K}^H(u) | \leq C |\partial^k {K}^H(u) |.
	\]
\end{enumerate}
\end{remark}

\par
We construct a regularity structure $(A, T, G)$ as follows.
\begin{itemize}
	\item We define a vector space $T$ as
	\begin{align*}
		S &:= \{ \Xi, \Xi \I(\hxi), \ldots, \Xi \I(\hxi)^M, {\bf 1}, \I(\hxi), \ldots, \I(\hxi)^M \},\\
		T &:= \langle S \rangle,
	\end{align*}
	where $M := \min \{ m \in \mathbb{N}; (m+1)(H-\kappa) - 1/2 - \kappa >0\}$.
	\item We define a degree $| \cdot |$ in $T$ as
	\begin{align*}
		| \Xi \I(\hxi)^m | &:= -1/2 - \kappa + m(H-\kappa), \,\, m \geq 0;\\
		| \I(\hxi)^m | &:= m(H-\kappa), \,\, m > 0; \\
		| {\bf 1} | &:= 0,
	\end{align*}
	and we define a set $A: = \{ |\tau| ; \tau \in T \}$.
	\item We define a structure group $G := \{ \Gamma_h; h \in \mathbb{R} \}$ as
	\begin{align*}
		&\Gamma_h {\bf 1} = {\bf 1}, \,\, \Gamma_h \Xi = \Xi, \,\, \Gamma_h \I(\hxi) = \I(\hxi) + h {\bf 1}, \\
		&\Gamma_h(\tau \cdot \tau') = \Gamma_h(\tau) \Gamma_h(\tau'), \,\, \tau, \tau' \in S.
	\end{align*}
\end{itemize}

The next step is to construct the model on $(A, T, G)$.
We prepare the notation $\mathbb{W}^n : \mathbb{R}^2 \to \mathbb{R}$ defined as
\begin{align*}
	\mathbb{W}^n(s,t) &:= \int_s^t (\hat{W}(r) - \hat{W}(s))^n d\xi(r), \,\, s \leq t;\\
	\mathbb{W}^n(s,t) &:= - \sum_{i=0}^n \binom{n}{i} (\hat{W}(t) - \hat{W}(s))^i \mathbb{W}^{n-i}(t,s), \,\, s \geq t.
\end{align*}
\begin{definition}
\label{def:naive}
We set the naive model $(\Pi, \Gamma)$ in $(A, T, G)$ as
\begin{align*}
	&\Pi_z {\bf 1} = 1, \,\, \Pi_z \Xi = \xi, \,\, \Pi_z \I(\hxi)^n = (\hat{W}(\cdot) - \hat{W}(s))^n, \\
	&\Pi_z \Xi \I(\hxi)^n = \left\{ t \mapsto \frac{d}{dt} \mathbb{W}^n(s,t) \right\},
\end{align*}
and
\begin{align*}
	&\Gamma_{ts} {\bf 1} = {\bf 1}, \,\, \Gamma_{ts} \Xi = \Xi, \,\, \Gamma_{ts} \Xi \I(\hxi) = \I(\hxi) + (\hat{W}(t) - \hat{W}(s)) {\bf 1}, \\
	&\Gamma_{ts} \tau \tau' = \Gamma_{ts} \tau \Gamma_{ts} \tau', \,\, \mathrm{for} \,\, \tau, \tau' \in S \,\, \mathrm{with} \,\, \tau \tau' \in S,
\end{align*}
where $\xi$ is a white noise on $\mathbb{R}$, and $\frac{d}{dt} \mathbb{W}^n(s,t)$ is the derivative of $\mathbb{W}^n(s, t)$ in a distributional sense.
\end{definition}
Following the procedure given in Section~\ref{sec:RS}, we generate a model with smoothing noises.
We fix a mollifier $\varrho: \mathbb{R} \to \mathbb{R}$ and set $\varrho_{\ve}(\cdot) = \varrho(\cdot/ \ve) / \ve$, $\xi^{\ve} = \varrho_{\ve} \ast \xi$, and $\hat{\xi}^{\ve} = \varrho_{\ve} \ast \hat{W}$ for $\ve >0$.
\begin{definition}
We define the smooth model $(\Pi^{\ve}, \Gamma^{\ve})$ in $(A, T, G)$ as
\begin{align*}
	&\Pi_z^{\ve} {\bf 1} = 1, \,\, \Pi_z^{\ve} \Xi = \dot{\xi}^{\ve}, \,\, \Pi_z \I(\hxi)^n = (\hat{\xi}^{\ve}(\cdot) - \hat{\xi}^{\ve}(z))^n, \\
	&\Pi_z^{\ve} \Xi \I(\hxi)^n = \dot{\xi}^{\ve} (\hat{\xi}^{\ve}(\cdot) - \hat{\xi}^{\ve}(z))^n,
\end{align*}
and
\begin{align*}
&\Gamma_{ts}^{\ve} {\bf 1} = {\bf 1}, \,\, \Gamma_{ts}^{\ve} \Xi = \Xi, \,\, \Gamma_{ts}^{\ve} \Xi \I(\hxi) = \I(\hxi) + (\hat{\xi}^{\ve}(t) - \hat{\xi}^{\ve}(s)) {\bf 1}, \\
	&\Gamma_{ts}^{\ve} \tau \tau' = \Gamma_{ts}^{\ve} \tau \Gamma_{ts}^{\ve} \tau', \,\, \mathrm{for} \,\, \tau, \tau' \in S  \,\, \mathrm{with}\,\, \tau \tau' \in S.
\end{align*}
\end{definition}

Equations~(\ref{eq:pirenom}) and (\ref{eq:gamrenom}) are well-defined in each symbol $\tau \in T$ so that we can construct the BPHZ-renormalized model $(\hat{\Pi}^{\ve}, \hat{\Gamma}^{\ve})$ in $(A, T, G)$ as in Definition~\ref{def:BPHZ}.  By Theorem~\ref{thm:BPHZplain}, we have the following theorem.
\begin{theorem}
\label{thm:BPHZroughvol}
We have
\begin{align}
\label{eq:RVrenorm1}
&\hat{\Pi}_z^{\ve} {\bf 1} = 1, \,\, \hat{\Pi}_z^{\ve} \Xi = \dot{\xi}^{\ve}, \,\, \hat{\Pi}_z^{\ve} \I(\hxi)^n = (\hat{\xi}^{\ve}(\cdot) - \hat{\xi}^{\ve}(z))^n, \\
	\label{eq:RVrenorm}
	&\hat{\Pi}_z^{\ve} \Xi \I(\hxi)^n = \Pi_z^{\ve} \Xi \I(\hxi)^n - nE[\dot{\xi}^{\ve}(0) \hat{\xi}^{\ve}(0)] \Pi_z^{\ve} \Xi \I(\hxi)^{n-1},
\end{align}
and we also have $\hat{\Gamma}^{\ve} = \Gamma^{\ve}$.
\end{theorem}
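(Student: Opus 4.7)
The plan is to recognize the regularity structure of Section 3 as a specialization of the one built in Section~\ref{sec:RS} and then read off the conclusion from Theorem~\ref{thm:BPHZplain}. Concretely, I would identify the framework with $d=2$, associating $\Xi_1 = \Xi$ to the smoothed white noise $\dot\xi^\ve$ and $\Xi_2 = \hxi$ to the noise whose integration yields $\hat\xi^\ve$; under this identification the alphabet $S$ used in Section~3 sits inside the alphabet $S$ of Section~\ref{sec:RS}, the group $G$ agrees, and the smooth model $(\Pi^\ve,\Gamma^\ve)$ of Definition~\ref{def:naive} coincides with the model constructed in Section~\ref{sec2:subsec:model} restricted to this sub-alphabet.

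Next I would check the Gaussian hypothesis required by Theorem~\ref{thm:BPHZplain}. Both $\xi^\ve = \varrho_\ve \ast \xi$ and $\hat\xi^\ve = \varrho_\ve \ast \hat W$ are obtained by convolving the mollifier with zero-mean Gaussian processes ($\xi$ is a white noise and $\hat W = \hat K^H \ast \xi$), so every finite linear combination of $\dot\xi^\ve(0)$ and $\hat\xi^\ve(0)$ is a centered Gaussian variable. Hence the hypothesis of Theorem~\ref{thm:BPHZplain} is satisfied at the base point $z = 0$ (the argument is translation-invariant by stationarity of the mollified noises).

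Having set this up, (\ref{eq:RVrenorm1}) follows at once from the first part of Theorem~\ref{thm:BPHZplain} applied to the symbols ${\bf 1}$, $\Xi$, and $\I(\hxi)^n$. For (\ref{eq:RVrenorm}) I would apply the second part of Theorem~\ref{thm:BPHZplain} to $\Xi\I(\hxi)^n$: by construction of $M$, the degree $|\Xi\I(\hxi)^n| = -1/2-\kappa + n(H-\kappa)$ is strictly negative for $n < M$ (and non-positive for $n = M$), so the hypothesis $|\Xi_i\I(\Xi_j)^n|<0$ is met in the regime where renormalization is nontrivial; when the degree is non-negative no subtraction is produced and (\ref{eq:RVrenorm}) still holds trivially with the correction term absent. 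The identity $\hat\Gamma^\ve = \Gamma^\ve$ is immediate from Definition~\ref{def:BPHZ}\,(3), as discussed in the remark following it (with the justification deferred to Appendix~\ref{subsec:gamma}).

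The only real work is the bookkeeping in the first step: verifying that the abstract symbols, model, and structure group of Section~3 are literal instances of those used in Section~\ref{sec:RS} under the identification $(\Xi,\hxi)\leftrightarrow(\Xi_1,\Xi_2)$. I do not anticipate any analytic obstacle, because once the identifications are in place every clause of Theorem~\ref{thm:BPHZroughvol} is a one-line corollary of Theorem~\ref{thm:BPHZplain}.
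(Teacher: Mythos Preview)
Your approach is essentially identical to the paper's own proof, which consists of a single sentence: apply Theorem~\ref{thm:BPHZplain} with $d=2$, $\xi_1=\xi^{\ve}$, and $\xi_2=\hat\xi^{\ve}$. Your additional remarks verifying the Gaussian hypothesis and the negativity of $|\Xi\I(\hxi)^n|$ for $n\le M$ are correct elaborations that the paper leaves implicit.
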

\begin{proof}
By applying Theorem~\ref{thm:BPHZplain} to the case of $d=2$, $\xi_1 = \xi^{\ve}$, and $\xi_2 = \hat{\xi}^{\ve}$, we deduce the conclusion immediately.
\end{proof}
This result corresponds to the renormalized model in \cite{RoughVolRS} and \cite{SingRP}.  In addition, BPHZ renormalization enables the model to converge to the naive model.
\begin{theorem}
\label{thm:normconv}
We have the following convergence in the norm of model :
\begin{equation}
	\label{eq:convmodel}
	\opnorm{\hat{\Pi}^{\ve} - \Pi} \to 0 \,\, \mathrm{as} \,\, \ve \to 0.
\end{equation}
\end{theorem}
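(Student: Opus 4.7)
The plan is to verify convergence symbol-by-symbol on the basis $S$ and then promote the resulting stochastic estimates to the model-norm statement using Gaussian hypercontractivity. For the ``easy'' symbols $\tau \in \{{\bf 1}, \Xi, \I(\hxi), \ldots, \I(\hxi)^M\}$, equation~(\ref{eq:RVrenorm1}) gives $\hat{\Pi}^{\ve}_z \tau = \Pi^{\ve}_z \tau$, so the problem reduces to standard mollification estimates: $\varrho_{\ve} \ast \xi \to \xi$ in the negative Hölder/Besov scale and $\hat{\xi}^{\ve} \to \hat{W}$ in the Hölder scale, both at a quantitative rate $O(\ve^{\delta})$ for some $\delta > 0$, using the kernel bounds on $\hat{K}^H$ listed in the remark preceding the regularity-structure definition.

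The core of the argument concerns $\tau = \Xi \I(\hxi)^n$ for $1 \leq n \leq M$. Using (\ref{eq:RVrenorm}), I would write
\begin{equation*}
\hat{\Pi}^{\ve}_z (\Xi \I(\hxi)^n)(t) = \dot{\xi}^{\ve}(t)\bigl(\hat{\xi}^{\ve}(t) - \hat{\xi}^{\ve}(z)\bigr)^n - n\, E[\dot{\xi}^{\ve}(0)\hat{\xi}^{\ve}(0)]\bigl(\hat{\xi}^{\ve}(t) - \hat{\xi}^{\ve}(z)\bigr)^{n-1},
\end{equation*}
and decompose the right-hand side in Wiener chaoses. The counterterm removes exactly the diagonal Wick contraction responsible for the ultraviolet blow-up, leaving a sum whose leading piece in the $(n+1)$-st chaos is a multiple Wiener--Itô integral against a kernel built from $\varrho_{\ve} \ast \hat{K}^H$. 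By Definition~\ref{def:naive} and the representation $\hat{W} = \hat{K}^H \ast \xi$, this is precisely the mollified approximant of the distributional derivative $\frac{d}{dt}\mathbb{W}^n(z,\cdot)$, so the kernels converge in $L^2$ to those defining $\Pi_z \Xi \I(\hxi)^n$.

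To pass from this pointwise/distributional convergence to $\opnorm{\hat{\Pi}^{\ve} - \Pi} \to 0$, I would derive a second-moment bound of the form
\begin{equation*}
E\bigl|\langle (\hat{\Pi}^{\ve}_z - \Pi_z)\tau, \psi_z^{\lambda} \rangle\bigr|^2 \lesssim \ve^{2\kappa} \lambda^{2|\tau| + 2\bar{\kappa}},
\end{equation*}
uniformly in $z$, $\lambda$ and in the rescaled test function $\psi_z^{\lambda}$, for some small positive exponents $\kappa$ and $\bar{\kappa}$. Since the quantities sit in a fixed finite sum of Wiener chaoses, Gaussian hypercontractivity upgrades the $L^2$ bound to any $L^p$, and a Kolmogorov-type argument with a dyadic cover of $(z,\lambda)$ produces the required uniform control. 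The statement $\hat{\Gamma}^{\ve} = \Gamma^{\ve} \to \Gamma$ part follows immediately from locally uniform convergence $\hat{\xi}^{\ve} \to \hat{W}$.

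The main obstacle is the weighted kernel estimate inside each Wiener chaos: after the BPHZ subtraction the residual singular kernel is a difference $\varrho_{\ve} \ast \hat{K}^H - \hat{K}^H$ paired with copies of $\hat{K}^H$, and one must track simultaneously the ultraviolet singularity at coinciding points and the gain from mollification in order to extract the quantitative rate $\ve^{2\kappa}$. Rather than performing this bookkeeping directly, my plan is to invoke the general convergence theorem of Chandra--Hairer~\cite{chandra2016analytic}, whose hypotheses are easy to verify here thanks to the finiteness of $M$ and the very small catalog of symbols in $S$.
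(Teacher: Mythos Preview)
Your proposal is essentially the same approach as the paper's. The paper's own proof reduces the statement to three symbol-wise bounds of the form
\[
\bigl|(\Pi_s\tau - \hat{\Pi}^{\ve}_s\tau)(\varphi_s^{\lambda})\bigr| \lesssim \lambda^{|\tau|}\,\ve^{\kappa}
\]
for $\tau \in \{\Xi,\, \I(\hxi),\, \Xi\I(\hxi)^n\}$ and then defers to \cite{RoughVolRS} for the actual estimates; the machinery you outline (Wiener-chaos expansion of $\hat{\Pi}^{\ve}_z\Xi\I(\hxi)^n$ after the BPHZ subtraction, second-moment kernel bounds, hypercontractivity, and a Kolmogorov argument) is precisely what that reference does, so your roadmap and the paper's coincide.

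The one point of divergence is your closing plan to invoke Chandra--Hairer~\cite{chandra2016analytic} in place of the direct kernel bookkeeping. That is a different route from the paper's, which stays with the hands-on estimates of \cite{RoughVolRS}. Appealing to the general BPHZ convergence theorem is legitimate and buys you the $\ve^{\kappa}$ rate without tracking individual Wick contractions, but you would still need to (i)~fit the present two-noise setup with the singular kernel $\hat K^H$ into the cumulant and power-counting hypotheses of \cite{chandra2016analytic}, and (ii)~separately identify the abstract BPHZ limit with the concrete It\^o model $\Pi$ of Definition~\ref{def:naive}. Neither step is hard given how few symbols there are, but the paper's direct approach sidesteps both by quoting the specific computations already carried out in \cite{RoughVolRS}.
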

\begin{proof}
It is sufficient to prove the following claim: there exists $\kappa > 0$ such that
\begin{align*}
&\left| (\Pi_s(\Xi) - \hpv_s(\Xi)) (\varphi^{\lambda}_s) \right| \lesssim \lambda^{-1/2 - \kappa} \ve^{\kappa}, \\
&\left| (\Pi_s(\I(\hxi) - \hpv_s(\I(\hxi)))) (\varphi^{\lambda}_s) \right| \lesssim \lambda^{H - \kappa} \ve^{\kappa},\\
&\left| (\Pi_s(\Xi\I(\hxi)^n) - \hpv_s(\Xi\I(\hxi)^n)) (\varphi^{\lambda}_s )\right|  \lesssim \lambda^{-1/2 + nH - \kappa} \ve^{\kappa},
\end{align*}
where $\varphi_s^{\lambda}(t) := \varphi((t-s)/\lambda) / \lambda$.
The proof of these inequalities is similar to that in \cite{RoughVolRS}.
\end{proof}

\begin{remark}
In \cite{RoughVolRS}, the renormalized model $(\bar{\Pi}^{\ve}, \bar{\Gamma}^{\ve})$ satisfies the following. For $t \in [0, T]$,
\[
	\bar{\Pi}_z^{\ve} (\Xi \I(\Xi)^n) (t) = \Pi_z \Xi \I(\Xi)^n - n E[\dot{W}^{\ve}(t) W^{H, \ve}(t)] \Pi_z \Xi \I(\Xi)^{n-1}(t), 
\]
where $\dot{W}^{\ve}(t) := \frac{d}{dt} (\varrho^{\ve} \ast W)(t)$ and $W^{H, \ve}(t) = (\varrho^{\ve} \ast W^H)(t)$.
The reason for this slight difference is that the model use raw noises such as Brownian motion and fractional Brownian motion but not the white noises $\xi$ and $\hat{W} = \hat{K}^H \ast \xi$.
\end{remark}


\appendix
\section{Trees and Forests}
\label{subsec:tree}
In this article, trees and forests from graph theory are repeatedly used as symbols of regularity structures.

\begin{definition}
Let $N$ be an abstract finite set and $E$ be the subset of $N \times N$.  We define maps $s, t: E \to N$ as $e = (v_1, v_2) \mapsto s(e) =v_1, t(e) = v_2$.
\begin{enumerate}
\item We say that the pair $T = (N, E)$ is a graph.  The elements of $N$ are called vertices or nodes, and the elements of $E$ are called  edges. If we want to emphasize $T$, then denote the sets of vertices and edges as $N_T$ and $E_T$, respectively.
\item The pair $(N, E)$ is called a simple directed graph if for any $(v_1, v_2) \in E$ we have $(v_2, v_1) \notin E$.  For a simple directed graph $(N, E)$ and $(v_1, v_2) \in E$, $v_1$ is called the parent of $v_2$, and $v_2$ is called the child of $v_1$.
\item We say that $(N, E)$ is connected if for any $v, v' \in N$ there exists a sequence of edges $e_1, \ldots, e_N$ such that $s(e_1) = v$, $t(e_N) = v'$, and $t(e_i) = s(e_{i+1}) \,\, (1\leq i \leq N-1)$.
\item We say that $(N, E)$ have a cycle if there exists a set of edges $e_1, \ldots, e_n \in E$ such that one has $t(e_i) = s(e_{i+1}) \,\,(1 \leq i \leq n-1)$ and $t(e_n) = s(e_1)$.
\item Let a simple directed graph $T = (N, E)$ be connected and have no cycle.  If there exists a unique node $v \in N$ such that $t(e) \neq v$ for any $e \in E$, then the graph $(N, E)$ is called a rooted tree and the unique node $v \in N$ is called the root of $(N, E)$.  We denote by $\varrho_T$ the root of $T$.
\item Let $\mathfrak{L}$ be a finite set,  $(N, E)$ be a rooted tree, and $\Type : E \to \mathfrak{L}$ be a map.  The triplet $(N, E, \Type)$ is called a typed tree.
\item For any rooted trees $T_1, T_2$, we define $T = T_1 \cdot T_2$ as
\begin{align*}
	N_T &:= N_{T_1}' \sqcup N_{T_2}' \sqcup \{ \varrho_T \},\\
	E_T &:= E_{T_1}' \sqcup E_{T_2}' \sqcup \{ (\varrho_T, v); v \in N_{T_1} \sqcup N_{T_2}\\
	&\hspace{45mm} \mathrm{such \,\, that \,\,} (\varrho_{T_1}, v) \in E_{T_1} \mathrm{\,\, or \,\,} (\varrho_{T_2}, v) \in E_{T_2} \}
	\end{align*}
where $N_{T_i}' := N_{T_i} \setminus \{ \varrho_{T_i} \}$, $E_{T_i}' := \{ e = (v, v') \in E_{T_i}; v \neq \varrho_{T_i} \}, i=1,2$.  This operation $T_1, T_2 \mapsto T_1 \cdot T_2$ is called a tree product.
\end{enumerate}
\end{definition}
Throughout this article, we assume that a rooted tree has no node such that it has two or more parents, that is, for any rooted tree $\tau$ and any $x \in N_{\tau}$,  we have $\# (t^{-1}( \{ x\})) \leq 1$.
\begin{example}
A rooted tree can be described as placing the root on the bottom as follows:
\[
\begin{tikzpicture}[scale = 0.5]
\coordinate (t1_0) at (0, 0.5);
\coordinate (t1_1) at (-1, 1.5);
\coordinate (t1_2) at (0, 1.5);
\coordinate (t1_3) at (1, 1.5);
\draw [thick] (t1_1) to (t1_0) to (t1_3);
\draw [thick] (t1_0) to (t1_2);
\filldraw (t1_0) circle [radius=1mm];
\filldraw (t1_1) circle [radius=1mm];
\filldraw (t1_2) circle [radius=1mm];
\filldraw (t1_3) circle [radius=1mm];
\draw (-2, 1) node{$T_1 =$};
\end{tikzpicture}
\]
\[
\begin{tikzpicture}[scale = 0.5]
\coordinate (t2_0) at (0.5, 0);
\coordinate (t2_1) at (-0.5, 1);
\coordinate (t2_2) at (1.5, 1);
\coordinate (t2_3) at (-1.5, 2);
\coordinate (t2_4) at (-0.5, 2);
\coordinate (t2_5) at (0.5, 2);

\draw [thick] (t2_1) to (t2_0) to (t2_2);
\draw [thick] (t2_3) to (t2_1) to (t2_4);
\draw [thick] (t2_1) to (t2_5);
\draw (-2.5, 1) node{$T_2=$};

\filldraw (t2_0) circle [radius=1mm];
\filldraw (t2_1) circle [radius=1mm];
\filldraw (t2_2) circle [radius=1mm];
\filldraw (t2_3) circle [radius=1mm];
\filldraw (t2_4) circle [radius=1mm];
\filldraw (t2_5) circle [radius=1mm];
\end{tikzpicture}
\]
The tree product of $T_1$ and $T_2$ is expressed as
\[
\begin{tikzpicture}[scale = 0.5]
\coordinate (t1_0) at (0, 0);
\coordinate (t1_1) at (0, 1);
\coordinate (t1_2) at (0, 2);
\coordinate (t1_3) at (-1, 2);
\coordinate (t1_4) at (1, 2);
\coordinate (t1_5) at (0.5, 1);
\coordinate (t1_6) at (1, 1);
\coordinate (t1_7) at (-0.5, 1);
\coordinate (t1_8) at (-1, 1);

\draw [thick] (t1_0) to (t1_1) to (t1_2);
\draw [thick] (t1_0) to (t1_5);
\draw [thick] (t1_0) to (t1_6);
\draw [thick] (t1_0) to (t1_7);
\draw [thick] (t1_0) to (t1_8);

\draw [thick] (t1_1) to (t1_3);
\draw [thick] (t1_1) to (t1_4);
\draw (-3, 1) node{$T_1 \cdot T_2 =$};

\filldraw (t1_0) circle [radius=1mm];
\filldraw (t1_1) circle [radius=1mm];
\filldraw (t1_2) circle [radius=1mm];
\filldraw (t1_3) circle [radius=1mm];
\filldraw (t1_4) circle [radius=1mm];
\filldraw (t1_5) circle [radius=1mm];
\filldraw (t1_6) circle [radius=1mm];
\filldraw (t1_7) circle [radius=1mm];
\filldraw (t1_8) circle [radius=1mm];

\end{tikzpicture}
\]
However, the graphs $T_3$ and $T_4$ defined below are not rooted trees because $T_3$ has two candidates for the root and $T_4$ has a loop.
\[
\begin{tikzpicture}[scale = 0.5]
\coordinate (t1_0) at (-0.5, 0);
\coordinate (t1_1) at (0.5, 0);
\coordinate (t1_2) at (0, 1);
\coordinate (t1_3) at (0, 2);
\coordinate (t1_4) at (-0.5, 3);
\coordinate (t1_5) at (0.5, 3);

\draw [thick] (t1_0) to (t1_2) to (t1_1);
\draw [thick] (t1_2) to (t1_3) to (t1_4);
\draw [thick] (t1_3) to (t1_5);

\filldraw (t1_0) circle [radius=1mm];
\filldraw (t1_1) circle [radius=1mm];
\filldraw (t1_2) circle [radius=1mm];
\filldraw (t1_3) circle [radius=1mm];
\filldraw (t1_4) circle [radius=1mm];
\filldraw (t1_5) circle [radius=1mm];

\draw (-2, 1.5) node{$T_3 = $};
\end{tikzpicture}
\]
\[
\begin{tikzpicture}[scale = 0.5]
\coordinate (t1_0) at (0, 0);
\coordinate (t1_1) at (-1, 1);
\coordinate (t1_2) at (0, 1);
\coordinate (t1_3) at (1, 1);
\draw [thick] (t1_1) to (t1_0) to (t1_3);
\draw [thick] (t1_0) to (t1_2) to (t1_3);

\filldraw (t1_0) circle [radius=1mm];
\filldraw (t1_1) circle [radius=1mm];
\filldraw (t1_2) circle [radius=1mm];
\filldraw (t1_3) circle [radius=1mm];

\draw (-2, 0.5) node{$T_4 = $};
\end{tikzpicture}
\]

\end{example}
\begin{definition}
\begin{enumerate}
\item A graph in which each connected component is a tree is called a forest.  In addition, the forest is called a rooted forest if each connected component of it is a rooted tree.
\item Let $\mathfrak{L}$ be a finite set.  The triplet $(N, E, \Type)$ such that $(N, E)$ is a rooted forest and $\Type: E \to \mathfrak{L}$ is called a typed rooted forest.
\end{enumerate}
\end{definition}
Regarding the empty set $\emptyset$ as a forest, a vector space generated by the totality of rooted forests is a unital algebra in which the product is the disjoint union and the unit is the empty set.  This product is called a forest product.  We denote by $1$ the unit of a forest product unless confusion occurs.
\begin{definition}
A colored forest is a pair $(F, \hat{F})$ satisfying the following conditions.
\begin{enumerate}
	\item $F= (E_F, N_F, \Type)$ is a typed rooted forest.
	\item The map $\hat{F} : E_F \sqcup N_F \to \mathbb{N}\cup \{ 0 \}$ satisfies the following: for any $e = (x, y) \in E_F$ such that $\hat{F}(e) \neq 0$, we have $\hat{F}(x) = \hat{F}(y) = \hat{F}(e)$.
\end{enumerate}
$\hat{F}$ is called the coloring of $F$.  For $i > 0$, we set
\[
\hat{F}_i = (\hat{E}_i, \hat{N}_i), \,\,\,\, \hat{E}_i = \hat{F}^{-1}(i) \cap E_F, \,\,\,\, \hat{N}_i = \hat{F}^{-1}(i) \cap N_F.
\]
Finally, we denote by ${\bf CF}$ the set of all colored forests.
\end{definition}

\section{BPHZ Renormalization of $\Gamma$}
\label{subsec:gamma}
First, we introduce a positive renormalization, which is the foundation of $\Gamma$.
A map $\Delta_+: T \to T \otimes T$ is defined as
\begin{align*}
	&\Delta_+ {\bf 1} = {\bf 1} \otimes {\bf 1}, \,\, \Delta_+ \Xi_i = \Xi_i \otimes {\bf 1} + {\bf 1} \otimes \Xi_i,\\
	&\Delta_+ \I = \I \otimes {\bf 1} + {\bf 1} \otimes \I,\\ 
	&\Delta_+ \I(\Xi_i) = \I(\Xi_i) \otimes {\bf 1} + \I \otimes \Xi_i + {\bf 1} \otimes \I(\Xi_i),\\
	&\Delta_+ (\tau \cdot \tau') = \Delta_+ \tau \cdot \Delta_+ \tau'
\end{align*}
for $\tau, \tau' \in S$ such that $\tau \cdot \tau' \in S$, and the domain is extended by imposing linearity.
Note that a product on $T \otimes T$ can be defined as $(\tau_1 \otimes \tau_2) \cdot (\tau_1' \otimes \tau_2') := (\tau_1 \cdot \tau_1') \otimes (\tau_2 \cdot \tau_2')$.

\begin{definition}
\begin{enumerate}
	\item A subspace ${J}_{-}$ is defined as 
	\[
		J_- := \langle \{ \tau \in T; \tau = \sigma \cdot \bar{\sigma}, \,\, \sigma, \bar{\sigma} \in T, \,\, \sigma \neq 1_2, \,\, |\sigma| \leq 0 \} \rangle,
	\]
	where $\cdot$ is a tree product.
	\item A quotient space generated by the above subspace is defined by
	\[
		T_+ := T / J_- ,
	\]
	and $\mathfrak{p}^{\mathrm{ex}}_{G} : T \to T_+$ denotes the canonical projection.
	\item A map $\delexplus : T \to T \otimes T_+$ is defined as $\delexplus := (\mathrm{Id} \otimes \pexplus) \Delta_+$.
\end{enumerate}
\end{definition}
As defined, the vector space $T_+$ and the map $\delexplus$ can be given as
\[
	T_+ = \langle \{ {\bf 1}, \I^n, \I(\Xi_i)^n; 1 \leq i \leq d, n \in \mathbb{N} \}
\]
and
\begin{align*}
	&\delexplus {\bf 1} = {\bf 1} \otimes {\bf 1}, \,\, \delexplus \Xi_i = \Xi_i \otimes {\bf 1},\\
	&\delexplus \I = \I \otimes {\bf 1} + {\bf 1} \otimes \I,\\ 
	&\delexplus \I(\Xi_i) = \I(\Xi_i) \otimes {\bf 1} + {\bf 1} \otimes \I(\Xi_i).
\end{align*}
\begin{definition}
For $s, t \in \mathbb{R}$, a map $\gamma_{ts}: T_+ \to \mathbb{R}$ is defined as
\begin{align*}
	&\gamma_{ts} {\bf 1} = 1, \,\, \gamma_{ts} \I = {\bf \Pi}(\I)(t)-{\bf \Pi}(\I)(s),\\
	&\gamma_{ts} \I(\Xi_i) = {\bf \Pi}(\Xi_i)(t) - {\bf \Pi}(\Xi_i)(s),\\
	&\gamma_{ts} (\tau \cdot \tau') = \gamma_{ts} \tau \cdot \gamma_{ts} \tau'
\end{align*}
for $\tau, \tau' \in S$ such that $\tau \cdot \tau' \in T_+$.
\end{definition}
Note that $\Gamma_{ts} = (\mathrm{Id} \otimes \gamma_{ts}) \delexplus$.  
The BPHZ renormalization of $\Gamma$ is as follows.
\begin{definition}
Let a pair $(\Pi, \Gamma)$ be a model on the regularity structure $(A, T, G)$. The BPHZ-renormalized model $(\hat{\Pi}, \hat{\Gamma})$ is defined by
\begin{align*}
	&\hat{\Pi}_s \tau = (\gminus \taexminus \otimes \Pi_z) \delexminus \tau, \\ 
	&\hat{\Gamma}_{ts} \tau = (\mathrm{Id} \otimes \gamma_{ts} (\gminus \otimes \mathrm{Id}) \delexminus) \delexplus \tau .
\end{align*}
\end{definition}
The next proposition is the main subject of this subsection.
\begin{prop}
We have
\begin{equation}
	\hat{\Gamma}_{ts} \tau = \Gamma_{ts} \tau, \,\, \tau \in T.
\end{equation}
\end{prop}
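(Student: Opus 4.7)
By the remark preceding the proposition, $\Gamma_{ts} = (\mathrm{Id} \otimes \gamma_{ts})\delexplus$, while by construction $\hat{\Gamma}_{ts} = (\mathrm{Id} \otimes \gamma_{ts}(\gminus \otimes \mathrm{Id})\delexminus)\delexplus$. Hence the claim $\hat{\Gamma}_{ts}\tau = \Gamma_{ts}\tau$ for every $\tau \in T$ reduces to the identity
\[
\gamma_{ts}\,(\gminus \otimes \mathrm{Id})\,\delexminus = \gamma_{ts}\quad\text{on } T_+.
\]

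The strategy is the standard ``characters that agree on generators are equal'' argument. First, I would verify that the linear functional $\hat{\gamma}_{ts} := \gamma_{ts}\,(\gminus \otimes \mathrm{Id})\,\delexminus$ is multiplicative on $T_+$ for the tree product, just like $\gamma_{ts}$ itself. This uses the comodule-algebra structure of $T_+$ over $(\hat{T}_-, \bullet)$ coming from \cite{AlgRenormRS}: $\delexminus(\tau \cdot \tau') = \delexminus(\tau) \cdot \delexminus(\tau')$, with the product on $T_- \otimes T_+$ given by the forest product on the left slot and the tree product on the right. Combined with the fact that $\gminus$ is a character for $\bullet$ (by definition) and that $\pexminus$ is an algebra morphism for $\bullet$ (since $J_+$ is a $\bullet$-ideal), this yields the multiplicativity of $\hat{\gamma}_{ts}$.

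It then suffices to check $\hat{\gamma}_{ts} = \gamma_{ts}$ on the tree-product generators $\mathbf{1}$, $\I$, and $\I(\Xi_j)$ (with $1 \le j \le d$) of $T_+$. The case $\tau = \mathbf{1}$ is immediate. For $\tau = \I$, one computes $\delexminus \I = \mathbf{1} \otimes \I + \I \otimes \mathbf{1}$, and $\gminus(\I) = \mathbf{\Pi}(\I)(0) = 0$, so only the first term contributes and $\hat{\gamma}_{ts}(\I) = \gamma_{ts}(\I)$. For $\tau = \I(\Xi_j)$, one expands
\[
\delexminus \I(\Xi_j) = \mathbf{1} \otimes \I(\Xi_j) + \I \otimes \Xi_j + \Xi_j \otimes \I + \I(\Xi_j) \otimes \mathbf{1},
\]
and the centered Gaussian hypothesis gives $\gminus(\Xi_j) = E[\dot{\xi}_j(0)] = 0$ and $\gminus(\I(\Xi_j)) = E[\xi_j(0)] = 0$, together with the deterministic vanishing $\gminus(\I) = 0$; again only the first term survives and $\hat{\gamma}_{ts}(\I(\Xi_j)) = \gamma_{ts}(\I(\Xi_j))$.

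The main obstacle is the multiplicativity of $\hat{\gamma}_{ts}$, which rests on the algebraic framework of BPHZ renormalization rather than on any ad hoc computation. The payoff is substantial: all the delicate cancellations one might worry about on products such as $\I(\Xi_j)^n$ (for which $\gminus$ itself need not vanish, e.g.\ $\gminus(\I(\Xi_j)^2) = E[\xi_j(0)^2] \neq 0$) are handled automatically by the character property, so long as one accepts the comodule-algebra framework and the vanishing of $\gminus$ on the three elementary symbols $\I$, $\Xi_j$, and $\I(\Xi_j)$ that the centered Gaussian assumption provides.
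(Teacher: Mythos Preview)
Your argument is correct and reaches the same conclusion as the paper, but the route is genuinely different. The paper proceeds by direct case analysis over $S$: for each $\tau$ it expands $\delexplus\tau$ explicitly and then checks, for every element $\sigma\in T_+$ appearing on the right, that $(\gminus\otimes\mathrm{Id})\delexminus\sigma=\sigma$. Concretely, for $\sigma=\I^l$ it observes that $\I^l$ has no subforest of negative degree, so $\delexminus\I^l=\mathbf{1}\otimes\I^l$; for $\sigma=\I(\Xi_j)^l$ it observes that the only negative-degree subtrees are copies of $\Xi_j$, on which $\gminus$ vanishes by centering. You instead factor the problem as $\hat\gamma_{ts}=\gamma_{ts}$ on $T_+$, invoke the comodule-algebra structure of $T_+$ over $T_-$ from \cite{AlgRenormRS} to get multiplicativity of $\hat\gamma_{ts}$, and then check only the generators $\mathbf{1},\I,\I(\Xi_j)$. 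Your approach is cleaner and scales better, at the cost of importing a structural result; the paper's is entirely self-contained.

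One minor inaccuracy worth noting: your displayed expansions of $\delexminus\I$ and $\delexminus\I(\Xi_j)$ are really those of $\Delta_-$ before applying $\pexminus$. The terms with $\I$ or $\I(\Xi_j)$ in the left slot are already annihilated by $\pexminus$ (since $|\I|,|\I(\Xi_j)|\ge 0$), so one does not need $\gminus(\I)=0$ or $\gminus(\I(\Xi_j))=0$ to dispose of them. After $\pexminus$ the only nontrivial surviving term is $\Xi_j\otimes\I$, and it is precisely the centered-noise identity $\gminus(\Xi_j)=E[\dot\xi_j(0)]=0$ that kills it; this is the same input the paper uses. The conclusion is unaffected, but tightening this step would make your argument cleaner.
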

\begin{proof}
The case where $\tau = {\bf 1}$ or $= \Xi_i$ is obvious.  We consider the case where $\tau = \I^n$.  Then, we have
\begin{align*}
	\delexplus \I^n &= (\I \otimes {\bf 1} + {\bf 1} \otimes \I)^n\\
	&= \sum_{l=0}^n \binom{n}{l} (\I^i \otimes \I^{n-l}).
\end{align*}
The edge $\I$ has positive degree $|\I| = 1$, so we have $\delexminus \I^l = {\bf 1} \otimes \I^l$.  Thus, we have
\begin{align*}
	\hat{\Gamma}_{ts} \I^n &= (\mathrm{Id} \otimes \gamma_{ts} (\gminus \otimes \mathrm{Id}) \delexminus) \sum_{l=0}^n \binom{n}{l} (\I^i \otimes \I^{n-l})\\
	&= (\mathrm{Id} \otimes \gamma_{ts} ) \sum_{l=0}^n \binom{n}{l} (\I^i \otimes \I^{n-l})\\
	&= \Gamma_{ts} \I^n.
\end{align*}
Next, we show the claim when $\tau = \Xi_i \I(\Xi_j)^n$.  Then, we have
\begin{align*}
	\delexplus  \Xi_i \I(\Xi_j)^n &= \delexplus \Xi_i \cdot \delexplus \I(\Xi_j)^n \\
	&= (\Xi_i \otimes {\bf 1}) \cdot (\I(\Xi_j) \otimes {\bf 1} + {\bf 1} \otimes \I(\Xi_j))^n\\
	&= \sum_{l=0}^n \binom{n}{l} \Xi_i \I(\Xi_j)^l \otimes \I(\Xi_j)^{n-l}.
\end{align*}
For $l \geq 1$, we have $(\gminus \taexminus \otimes \mathrm{Id}) \delexminus (\I(\Xi_j)^l) = \I(\Xi_j)^l$ because there exists no tree $\tau' \subset \I(\Xi_j)^m$ such that $|\tau'| <0$ and $\gminus \taexminus \tau' \neq 0$.  Thus, we have
\begin{align*}
	\hat{\Gamma}_{z\bar{z}} (\Xi_i \I(\Xi_j)^n) &= (\mathrm{Id} \otimes \gamma_{z \bar{z}} (\gminus \taexminus \otimes \mathrm{Id}) \delexminus) \sum_{l=0}^n \binom{n}{l} (\Xi_i \I(\Xi_j)^l \otimes \I(\Xi_j)^{n-l})\\
	&=  (\mathrm{Id} \otimes \gamma_{z \bar{z}}) \sum_{l=0}^n \binom{n}{l} (\Xi_i \I(\Xi_j)^l \otimes \I(\Xi_j)^{n-l})\\
	&= \Gamma_{z\bar{z}} (\Xi_i \I(\Xi_j)^n).
\end{align*}
\end{proof}
\bibliographystyle{abbrv}
\bibliography{manuscript}

\end{document}